\numberwithin{equation}{section}
\numberwithin{figure}{section}
\theoremstyle{plain}
\newtheorem{theorem}{Theorem}[section]
\newtheorem{lemma}[theorem]{Lemma}
\newtheorem{proposition}[theorem]{Proposition}
\theoremstyle{plain}
\numberwithin{equation}{section}
\theoremstyle{remark}
\newtheorem{remark}[theorem]{Remark}
\DeclareMathOperator{\supp}{supp}
\DeclareMathOperator{\dist}{dist}
\begin{document}
\date{}

\title
[Improved Weyl remainder for the planar disk]{An improved remainder estimate in the Weyl formula for the planar disk}
 
\author{Jingwei Guo}
\address{School of Mathematical Sciences\\
University of Science and Technology of China\\
Hefei, 230026\\ P.R. China\\} \email{jwguo@ustc.edu.cn}
\author{Weiwei Wang}
\address{School of Mathematical Sciences\\
University of Science and Technology of China\\
Hefei, 230026\\ P.R. China\\} \email{wawnwg123@163.com}
\author{Zuoqin Wang}
\address{School of Mathematical Sciences\\
University of Science and Technology of China\\
Hefei, 230026\\ P.R. China\\} \email{wangzuoq@ustc.edu.cn}

\thanks{The authors are partially supported by the NSFC Grant No. 11571131. J.G. is also partially supported by the NSFC Grant No. 11501535. 
Z.W. is also partially supported by the NSFC Grant No. 11526212. 
}

\begin{abstract}
In \cite{colin}, Y. Colin de Verdi\`{e}re proved that the remainder term in the two-term Weyl formula for the eigenvalue counting function for the Dirichlet Laplacian associated with the planar disk is of order $O(\lambda^{2/3})$. In this paper, by combining with the method of exponential sum estimation, we will give a sharper remainder term estimate $O(\lambda^{2/3-1/495})$.
\end{abstract}

\subjclass[2010]{Primary 35P20, 11P21}

\keywords{Laplace eigenvalue, Weyl law, Lattice point problem, exponential sum.}

\maketitle

\section{Introduction}\label{introduction}


Let $D \subset \mathbb R^2$ be a bounded planar\footnote{Although most results are valid in higher dimensions, we will only present the 2-dimensional version here.}  domain with piecewise smooth boundary. Denote by $\mu^2_1 \le \mu^2_2 \le \cdots  $ the eigenvalues of the  Laplacian associated to $D$ with either Dirichlet or Neumann boundary conditions. To describe the asymptotic growth of $\mu_n$ as $n$ goes to infinity, one is led to study the eigenvalue counting function $\mathcal N_D(\mu)=\#\{n\ |\ \mu_n <\mu \}$. In his seminal work in 1911,  H. Weyl  (\cite{weyl11}) proved
\[\mathcal N_D(\mu)=\frac{\mathrm{Area}(D)}{4\pi}\mu^2+o(\mu^2).\]
To improve the remainder term had been one of the most attractive problems in spectral theory for decades. A huge number of papers were published on this problem. One of the ultimate goals was to prove the following two-term asymptotic formula for $\mathcal N_D(\mu)$,  which was conjectured by H. Weyl in 1913 (\cite{weyl13}),
\[\mathcal N_D(\mu)=\frac{\mathrm{Area}(D)}{4\pi}\mu^2 \mp \frac{\mathrm{Length}(\partial D)}{4\pi}\mu+o(\mu),\]
where ``$-$" is for the Dirichlet boundary condition and ``$+$" is for the Neumann boundary condition. The two-term asymptotic formula above was finally proven by V. Ivrii \cite{ivrii} and by R. Melrose \cite{melrose} under some ``non-periodic billiard trajectory" assumptions. For more backgrounds and history of Weyl's law, we will refer to the nice survey papers \cite{ANPS}, \cite{ivrii2} and the references therein.

Although it is still a conjecture that any piecewise smooth planar domain satisfies the ``non-periodic billiard trajectory" assumption, people has already confirmed that a number of nice regions satisfy that property, and thus the two-term Weyl's law hold. Such regions include all bounded convex domains with analytic boundary and all bounded domains with piecewise-smooth concave boundary. Now supoose $D$ is such a region. A natural further investigation would then be to find the smallest constant $\kappa$ such that $\mathcal R_D(\mu)=O(\mu^\kappa)$, where
\[\mathcal R_D(\mu) := \mathcal N_D(\mu)-\frac{\mathrm{Area}(D)}{4\pi}\mu^2 \pm \frac{\mathrm{Length}(\partial D)}{4\pi}
\mu  \]
is the remainder term in the two-term Weyl's law. This problem is subtle since $\mathcal R_D(\mu)$ encodes the dynamical information of the billiard flow associated with the region $D$.
On one hand, there is no universal $\kappa<1$  so that $\mathcal R_D(\mu)=O(\mu^\kappa)$ holds for all planar domains: for each $\kappa <1$ V. Lazutkin and D. Terman constructed in \cite{LT} convex planar domains $D$ with specific billiard dynamics so that
$\mathcal R_D(\mu) \ne O(\mu^\kappa)$ (though the two-term Weyl's law still holds). On the other hand, $\kappa$ can be much smaller than one for specific domains. For example, when $D$ is a square in the plane, the problem of estimating $\mathcal R_{square}(\mu)$ is equivalent to the famous Gauss circle problem. According to the current record (due to M. Huxley \cite{Huxley}) on the latter problem, one has $\mathcal R_{square}(\mu)=O(\mu^{131/208+\varepsilon})$.

Another very interesting and typical example is the planar disk. It is well-known that the Laplacian eigenvalues of the planar disk (with Dirichlet boundary condition) are precisely the zeroes of the Bessel functions. By studying the asymptotic growth of large zeroes, in  \cite{colin} Y. Colin de Verdi\`{e}re was able to convert the problem of estimating $\mathcal R_{disk}(\mu)$ into a lattice point problem of the following region $\Omega$ (see Figure \ref{figure1.1}) which is non-convex and contains two cusp points. The explicit definition of the region $\Omega$ is
\begin{equation}\label{DomainOmega}
\Omega=\{(t,s)\in \mathbb{R}^2 : -1\leq t\leq 1,\, \max(0,-t)\leq s\leq g(t)\},
\end{equation}
where
\begin{equation}\label{def-g}
g(t)=(\sqrt{1-t^2}-t\arccos t)/\pi, \quad t\in [-1, 1].
\end{equation}

\begin{figure} 
\includegraphics[width=0.7\textwidth]{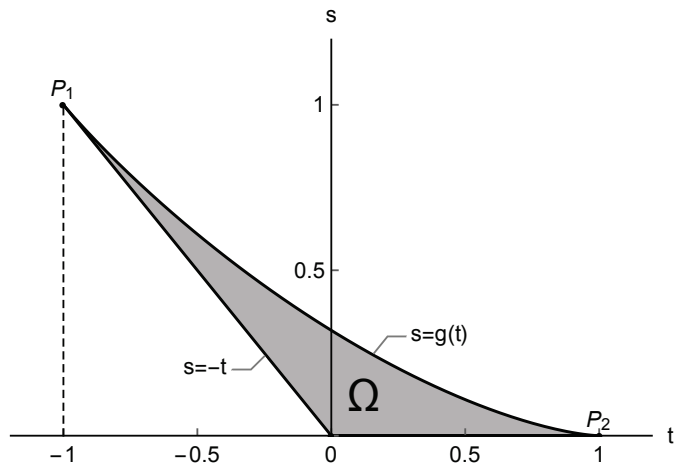}
\caption{The domain $\Omega$}\label{figure1.1}
\end{figure}

By studying the lattice point problem associated with the domain $\Omega$ above, Y. Colin de Verdi\`{e}re was able to prove\footnote{As mentioned in \cite{colin}, the same result was also announced in 1964 by N. Kuznecov and B. Fedosov in \cite{KF}.} $\kappa \le 2/3$, {\it i.e.}
\[\mathcal R_{disk}(\mu)=O(\mu^{2/3}).\]

In this paper, we will improve the power $2/3$ a bit further:
\begin{theorem}\label{spectrum-thm}
For the planar disk with either Dirichlet or Neumann boundary condition,
\[\mathcal R_{disk}(\mu)=O(\mu^{2/3-1/495}).\]
\end{theorem}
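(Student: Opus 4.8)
The plan is to follow Colin de Verdière's reduction of $\mathcal R_{disk}(\mu)$ to counting lattice points in the dilated region $\mu\Omega$, but to replace his crude geometric lattice‑point bound by the modern machinery of exponential sum estimates. Write $N(\mu) = \#(\mu\Omega \cap \mathbb Z^2)$; the classical van der Corput/Huxley approach expresses the lattice discrepancy
\[
N(\mu) - \mu^2\,\mathrm{Area}(\Omega) + (\text{boundary term})
\]
as a sum of one‑dimensional exponential sums $\sum_{m} e(\mu f(m/\mu))$ coming from Poisson summation applied in one variable, where $f$ is essentially the boundary function $g$ (or its inverse on the two arcs of $\partial\Omega$). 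The first step is therefore to make this reduction precise: split $\partial\Omega$ into finitely many arcs on which the defining curve is convex with nonvanishing curvature, handle the two cusp points and the corner at $(1,0)$ and $(-1,0)$ separately (these contribute only $O(\mu^{2/3})$ or better, and in fact the cusps are the source of the $2/3$ barrier that we must beat on the \emph{smooth} part), and reduce the smooth arcs to exponential sums of the shape $S = \sum_{M < m \le M'} e(T\,\phi(m/M))$ with $T \asymp \mu$ and $\phi$ smooth with $\phi'' \ne 0$.

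**Next** I would apply the one‑dimensional exponential sum technology: first van der Corput's $B$‑process (Poisson summation / stationary phase) to convert $S$ into a dual sum, then iterate with the $A$‑process (Weyl–van der Corput shifting) to obtain a nontrivial saving over the trivial bound $|S| \le \mu\,\mathrm{Area}$. Using a simple exponent pair $(k,\ell)$ — e.g. the van der Corput pair obtained from $AB\cdots$ iterations, or more efficiently the Huxley–Watt type pairs — one gets $|S| \ll \mu^{k/(k+1)} \cdot (\text{length})^{\ell}$, and optimizing the number of iterations against the length of the arc and the range of the dual variable yields a bound of the form $O(\mu^{2/3-\delta})$ for an explicit small $\delta$. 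The specific exponent $1/495$ suggests that the optimization is somewhat delicate: one must balance the contribution of the dual exponential sums (which improve on $\mu^{2/3}$) against the \emph{cusp contributions}, which genuinely cannot be pushed below $\mu^{2/3}$ by these methods without extra ideas — so the true improvement can only be extracted from the curved, non‑cusp portion, and $1/495$ is what survives after the cusp terms are accounted for at the $O(\mu^{2/3})$ level with a logarithmic or power saving recovered from a careful second‑order analysis near the cusps (using that $g$ behaves like a power law $s \sim c\,(1-t)^{3/2}$ there).

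**The main obstacle**, as I see it, is twofold. First, the region $\Omega$ is not a smooth convex oval — it has two cusps and the boundary function $g$ has unbounded higher derivatives near $t = \pm 1$ — so one cannot simply quote an off‑the‑shelf lattice point theorem for smooth convex bodies; the reduction to exponential sums must be done by hand with careful control of the error terms coming from the non‑smooth features, and showing that these do not destroy the gain $\mu^{-1/495}$ is the technical heart of the matter. Second, extracting a genuine power saving $\mu^{-\delta}$ (rather than just $O(\mu^{2/3})$) requires committing to a specific chain of $A$‑ and $B$‑processes and verifying that the relevant derivatives $\phi'', \phi''', \dots$ of the arc‑length or inverse‑boundary parametrizations satisfy the size hypotheses of the exponent pair machinery uniformly on the admissible arcs after the singular arcs near the cusps have been excised at a scale $\mu^{-\alpha}$ chosen to optimize against $2/3$; it is the interplay between this cutoff scale $\alpha$ and the exponent pair that produces the precise constant $1/495$.
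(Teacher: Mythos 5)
Your high-level strategy --- reduce $\mathcal R_{disk}(\mu)$ to the lattice point count for $\mu\Omega$ via Bessel-zero asymptotics, then beat the $O(\mu^{2/3})$ barrier with van der Corput $A$/$B$-processes --- is the right one and is the strategy of the paper. But the proposal stops exactly where the real work begins, and one of its guiding claims would steer you wrong. The central difficulty is not that one must merely ``excise the singular arcs'': after the $B$-process the dual sum runs over frequencies $k$ whose support function $H(k)=\langle k, x(k/|k|)\rangle$ has derivatives of order $|\nu|$ of size $K_\xi^{|\nu|-3}$, where $K_\xi$ is the curvature of $\partial\Omega$ at the point with normal $\xi=k/|k|$ and $K_\xi\to\infty$ toward the cusp directions. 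A standard exponent-pair bound, whose hypotheses require uniform derivative sizes, does not apply uniformly over the normal cone, and no amount of ``committing to a specific chain of $A$- and $B$-processes'' fixes that without a new idea. The paper's mechanism is: (i) decompose the normal cone by curvature level $\Delta$, estimating the two cusp sectors (angular width $\asymp 1/\Delta$) by the stationary-phase bound --- they contribute $\Delta^{-3/2}\mu^{1/2}\varepsilon^{-1/2}$, so your assertion that the cusp contributions ``genuinely cannot be pushed below $\mu^{2/3}$'' is false; the $2/3$ barrier comes from the totality of the stationary-phase terms, not from the cusps; (ii) on the complementary sector, run a two-dimensional exponential sum argument \`a la M\"uller in which every hypothesis and every bound carries an explicit power of the local curvature $K$ (Proposition \ref{claim1}), with the differencing vectors $v_1,v_2$ in the $A$-process chosen of length $\asymp K^{q+1}$ precisely so that the Hessian determinant of the differenced phase stays bounded below by a power of $K$ (Lemma \ref{non-vanishinglemma}). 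This curvature-weighted version of the machinery is the missing idea; without it the ``verify the size hypotheses uniformly'' step you defer is exactly the step that fails.

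Two smaller but genuine gaps. You treat the eigenvalue-to-lattice reduction as a quotable black box; it almost is, but the comparison yields $|\mathcal N_{disk}(\mu)-N_\Omega(\mu)|\le 3\bigl(N_\Omega(\mu+C/\mu)-N_\Omega(\mu-C/\mu)\bigr)+O(\mu^{1/3})$, so you must invoke the \emph{improved} lattice asymptotic at the two perturbed radii to see that the difference term is itself $O(\mu^{2/3-1/495})$; Colin de Verdi\`ere's $O(\mu^{2/3})$ version of the comparison does not suffice on its own. And the exponent $1/495$ does not arise from a ``second-order analysis near the cusps'' or from the power law $g\sim c(1-t)^{3/2}$: it arises from balancing the main-sector bound $\Delta^{68/11}\mu^{1/2+1/22}\varepsilon^{-7/22}$ against the cusp-sector bound $\Delta^{-3/2}\mu^{1/2}\varepsilon^{-1/2}$ and the smoothing error $\mu\varepsilon$, which forces $\Delta=\mu^{1/495}$ and $\varepsilon=\mu^{-1/3-1/495}$.
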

We remark that recently, S. Eswarathasan, I. Polterovich and J. Toth show in \cite{EPT} that for a class of planar regions that includes the planar ellipses, as $\mu \to \infty$ one has
\[
\frac 1{\mu}\int_{\mu}^{2\mu}\mathcal R_D(\tau)d\tau \gg \sqrt \mu.
\]
So in particular, one has $\kappa \ne 1/2$ for the planar disk.


Before we explain the strategy of the proof of the main theorem above, let us say some words on the classic lattice point problem.  Let $\Omega$ be a planar compact domain. One would like to count the number of lattice points (modulo a translation) in the dilated domain $\mu \Omega$ for large $\mu$, namely
\begin{equation*}
N_{\Omega}(\mu):=\#\left(\mu \Omega\cap \mathbb Z^2_{a,b}\right),
\end{equation*}
where $\mathbb Z^2_{a,b}=\mathbb Z^2+(a,b)$ is the  translation of $\mathbb Z^2$ by the vector $(a, b)$. Under very mild assumptions, it is easy to deduce that $N_\Omega(\mu) \sim \mathrm{Area}(\Omega)\mu^2$ and one is led to study the asymptotic behavior of the remainder term
\begin{equation}\label{remainder}
P_{\Omega}(\mu):= N_{\Omega}(\mu) - \mathrm{Area}(\Omega)\mu^2.
\end{equation}
This is known as \emph{the lattice point problem}.

A classic result is that $P_\Omega(\mu) = O(\mu^{2/3})$ under certain proper regularity and curvature conditions on $\partial\Omega$ if the origin is an interior point of a convex $\Omega$. This type of estimate with the power $2/3$ reaches back to a classic work \cite{vandercorput1920} of J.G. Van der Corput in 1920, in which domains having boundary curve of class $C^2$ and nowhere vanishing radius of curvature were considered. In fact, the same estimate holds for quite general domains after a ``generic'' rotation for almost all $(a, b)$. See for example Y. Colin de Verdi\`{e}re~\cite{colin77}. For more results of this type we refer interested readers to the first author~\cite{guo2, guo3} and the references therein.

Numerous works have been done in improving the power $2/3$ for various classes of domains during the last century. We will not give a detailed account of the history and all known results here, but just mention a few briefly. For a planar convex domain having sufficiently smooth boundary with nonzero curvature the best known bound, $O(\mu^{131/208+\varepsilon})$, is obtained by M.  Huxley in \cite{Huxley}. If the
boundary curve contains points with curvature zero and irrational slope, under certain assumptions about the Diophantine approximation of the slope by rationals, W. M{\"u}ller and W.~G. Nowak~\cite{mullernowak, mullernowak2} proved (among others) bounds of the form
\begin{equation*}
O(\max(\mu^{2/3-\gamma}, \mu^{7/11}(\log \mu)^{45/22})),
\end{equation*}
where $\gamma>0$ depends on the maximal order of vanishing of the curvature. As a consequence, they obtained the same bound after almost all rotations. Later the first author~\cite{guo2, guo3} proved bounds with absolute exponents less than $2/3$ for almost all rotations.

To get the the bound $O(\mu^{2/3})$, roughly speaking, a combination of the Poisson summation formula and (nowadays standard) oscillatory integral estimates would suffice. To get sharper bounds, however, one needs to additionally combine with methods of exponential sum estimation (like the classic Van der Corput's method or more sophisticated methods like the discrete Hardy--Littlewood method).

In order to prove Theorem \ref{spectrum-thm} we need to consider the lattice point problem of the domain $\Omega$ defined by \eqref{DomainOmega} (see Figure \ref{figure1.1}). This compact non-convex domain has two cusp points, {\it i.e.} $P_1(-1,1)$ and $P_2(1,0)$. The curvature of the curve $s=g(t)$ tends to infinity as one approaches the cusps. The slopes at $P_1$ and $P_2$ are rational. Due to these special features, no known results on the lattice point problem can be applied directly. In \cite{colin}, by using oscillatory integral methods it is proven that $P_\Omega=-\mu/2+O(\mu^{2/3})$ when $(a, b)=(0, -1/4)$.  In this paper we will modify the Van der Corput's method of exponential sums in literature (see for example W. M\"uller~\cite{mullerII}), by adding an extra parameter (in order to handle the unusual curvature), to prove the following sharper estimate.

\begin{theorem}\label{lattice-thm}
Suppose $(a, b)=(0, -1/4)$. For the lattice point remainder (defined by \eqref{remainder}) associated with the domain $\Omega$ (defined by \eqref{DomainOmega})  we have
\begin{equation}\label{LatticeRemainder}
P_{\Omega}(\mu)=-\mu/2+O(\mu^{2/3-1/495}).
\end{equation}
\end{theorem}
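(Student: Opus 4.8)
The plan is to reduce the lattice-counting problem for $\mu\Omega$ to estimating exponential sums, following the classical Van der Corput/Müller strategy but with an extra truncation parameter to cope with the blow-up of curvature near the cusps $P_1$ and $P_2$. First I would apply the Poisson summation formula (equivalently, Fourier expansion of the characteristic function of $\mu\Omega$) to write $P_\Omega(\mu)$ as a sum over nonzero frequencies $(m,n)\in\mathbb Z^2$ of the Fourier transform of $\mathbf 1_{\mu\Omega}$. The boundary of $\Omega$ consists of three pieces: the two straight segments (where $s=0$ and $s=-t$) and the curved arc $s=g(t)$. The straight segments contribute terms that are easy to control (and, with the shift $(a,b)=(0,-1/4)$, should be responsible for the clean main term $-\mu/2$ in \eqref{LatticeRemainder}); so the heart of the matter is the arc $\Gamma:\,s=g(t)$. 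After integrating by parts once in the Fourier integral over $\mu\Omega$, one is led to a one-dimensional oscillatory integral along $\Gamma$ with phase $\mu(mt+ng(t))$, whose stationary-phase analysis produces, after summation in one of the variables, an exponential sum of the shape
\[
S=\sum_{n\sim N}\ \sum_{m\sim M} e\bigl(\mu\,\psi(m/n)\,n\bigr)\cdot(\text{smooth amplitude}),
\]
where $\psi$ is essentially the Legendre transform of $g$ and $e(x)=e^{2\pi i x}$.

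The second step is to dyadically decompose the frequency ranges and, crucially, to split off a neighborhood of the cusps. Near $t=\pm1$ the radius of curvature of $\Gamma$ degenerates, so the usual second-derivative (Van der Corput B-process) bounds, which depend on lower bounds for $|\psi''|$, deteriorate. I would introduce a parameter $\delta=\delta(\mu)$ isolating the arcs within distance $\delta$ of the cusps: on the ``cusp part'' I would estimate trivially or by a crude first-derivative (A-process) bound, exploiting that the arclength there is small; on the ``bulk part'' $|t|\le 1-\delta$ the curvature is bounded above and below, so the standard exponent-pair machinery applies. Optimizing $\delta$ against $\mu$ is what is supposed to interpolate between the trivial exponent $2/3$ and the improved $2/3-1/495$; the precise fraction $1/495$ will come out of balancing the cusp contribution, the main exponential-sum bound, and the error from the truncations.

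The third step is the exponential-sum estimate on the bulk part. Here I would apply the one-dimensional Van der Corput process (the $AB^k$ combinations, or equivalently an exponent pair $(k,\ell)$) to the sum $\sum_m e(\mu\,\phi_n(m))$ with $\phi_n(m)=\psi(m/n)n/\mu$ suitably normalized, keeping careful track of how the implied constants depend on the lower bound $c\,\delta^{\alpha}$ for $|\phi_n''|$ coming from the degenerating curvature (this is where the extra parameter is genuinely needed, as the paper says, following Müller \cite{mullerII}). Using a good exponent pair — plausibly one of the Huxley-type pairs or a simple iterate like $(2/7,4/7)$ or $(1/6,4/6)$-style combinations — in place of the trivial estimate upgrades the $\mu^{2/3}$ to $\mu^{2/3-\eta}$ for some explicit $\eta>0$, and tracking the $\delta$-dependence through the optimization yields $\eta=1/495$. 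The main obstacle, and the place I expect the bulk of the technical work, is precisely this bookkeeping near the cusps: one must verify uniform bounds on all the derivatives of the amplitude and phase (which involve $g$ and its inverse, with $g'\to\mp\infty$ and $g''$ blowing up at the endpoints), ensure the stationary point stays in range, and confirm that the exponent-pair hypotheses hold with constants that degrade only polynomially in $\delta$, so that the final balance produces a genuine power saving rather than just a logarithmic one.
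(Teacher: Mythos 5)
Your overall architecture --- Poisson summation, isolating the straight segments to produce the $-\mu/2$, truncating a neighborhood of the cusps by a curvature/distance parameter, running a Van der Corput process on the bulk, and balancing --- is genuinely parallel to the paper's. The paper implements the cusp truncation on the frequency side: it decomposes the normal cone $\mathfrak{C}_0$ into $D_1\cup D_2\cup D_3$ according to whether the curvature $K_\xi$ at the point of $\Gamma_g$ with normal $\xi$ exceeds a parameter $\Delta$, uses the stationary-phase bound of Proposition \ref{estimate-key-integral} on the thin cones $D_1,D_3$ (your ``cusp part''), and runs the A/B-process on $D_2$ (your ``bulk''). One structural difference worth noting: after extracting the leading stationary-phase term, the paper estimates a genuinely \emph{two-dimensional} exponential sum over lattice points $k$ in the cone, with phase $\mu H_1(k)$ built from the support function $H(\xi)=\langle\xi,x(\xi)\rangle$, rather than the one-dimensional sums $\sum_m e(\mu\psi(m/n)n)$ you propose. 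Because $H$ is homogeneous of degree one, its Hessian is singular (Lemma \ref{upperforH}), so the B-process after $q$ differencing steps only works if the differencing directions $v_1,v_2$ are chosen so that a higher-order mixed determinant $h_q$ is bounded below --- this is the entire content of Section \ref{nonvanishing-det} and is an obstacle your one-dimensional framing does not surface, though a purely one-dimensional route (in the spirit of the classical treatment of convex curves) is not obviously doomed.

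The genuine gap is in your third step. Everything that produces the actual exponent is deferred to ``a good exponent pair'' and ``bookkeeping,'' with no identification of which process is applied, how its constants degrade in the curvature parameter, or why the balance lands at $1/495$ rather than some other (or no) power saving. In the paper this is Proposition \ref{claim1}: an iterated Weyl--Van der Corput inequality with $q=3$ followed by Poisson summation and stationary phase, in which every hypothesis carries explicit powers of the curvature parameter $K$ (conditions \eqref{res-M}, \eqref{restriction}) and the admissible dyadic range $C\Delta^{30}\le 2^j\le C'\mu^{4/5}$ must be verified; the exponent $1/495$ then comes from balancing \eqref{ooo} against \eqref{D1D3} and against $\mu\varepsilon$. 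Relatedly, you omit the mollification needed to apply Poisson summation to the characteristic function of a cusped domain: the paper sandwiches $N_\Omega(\mu)$ between convolutions with $\rho_\varepsilon$ over enlarged/shrunk domains, and the smoothing scale $\varepsilon$ is a \emph{second} parameter that enters the final optimization ($\varepsilon=\mu^{-1/3-1/495}$, $\Delta=\mu^{1/495}$). Without carrying $\varepsilon$ through the balance, the claimed exponent cannot be reproduced.
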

Once this theorem is proven, Theorem \ref{spectrum-thm} follows easily from a comparison argument between $\mathcal N_{disk}(\mu)$ and $N_\Omega(\mu)$  which is essentially due to Colin de Verdi\`{e}re.

\begin{remark}
Our goal in this paper is mainly to obtain bounds with an exponent strictly less than $2/3$ for both the eigenvalue counting problem and the lattice point counting problem. What we can get so far is the number $2/3-1/495$. It is very likely that one can further lower this number by iterating the estimation of exponential sums used in this paper. An interesting question would then be how far one can go by using the same method.
\end{remark}


{\it Notations:} We use the usual Euclidean norm $|x|$ for a point
$x\in \mathbb{R}^2$. $B(x, r)\subset \mathbb{R}^2$ represents the
Euclidean ball centered at $x$ with radius $r$. The norm of a matrix
$A\in \mathbb{R}^{2\times 2}$ is given by $\|A\|=\sup_{|x|=1}|Ax|$.
We set $e(s)=\exp(2\pi i s)$,
$\mathbb{Z}_{*}^{2}=\mathbb{Z}^{2}\setminus \{0\}$, and
$\mathbb{R}^2_*=\mathbb{R}^2\setminus \{0\}$. The Fourier transform
of $f\in L^1(\mathbb{R}^2)$ is $\widehat{f}(\xi)=\int f(x)
e\left(-\langle x, \xi\rangle\right) \, dx$.

We use the differential operators
\begin{equation*}
D^{\nu}_{x}=\frac{\partial^{|\nu|}}{\partial x_1^{\nu_1} \partial
x_2^{\nu_2}} \quad \left(\nu=(\nu_1, \nu_2)\in \mathbb{N}_0^2,
|\nu|=\nu_1+\nu_2 \right)
\end{equation*}
and the gradient operator $\nabla_x$. We often omit the subscript if
no ambiguity occurs.

For functions $f$ and $g$ with $g$ taking nonnegative real values,
$f\lesssim g$ means $|f|\leqslant Cg$ for some constant $C$. If $f$
is nonnegative, $f\gtrsim g$ means $g\lesssim f$. The Landau
notation $f=O(g)$ is equivalent to $f\lesssim g$. The notation
$f\asymp g$ means that $f\lesssim g$ and $g\lesssim f$.


The rest of this paper is organized as follows. In Section \ref{oscillatory-integral} we will prove some properties of the region $\Omega$ and provide both estimates and asymptotic formulas of certain integral along a part of $\partial\Omega$. In Section \ref{expsumsec} we will adjust the standard exponential sum estimation to our case which involves an extra parameter $K$ (representing the curvature in its later application). In Section \ref{nonvanishing-det} we justify the nonvanishing of certain determinants that is needed in applying the method of stationary phase.
Finally we will prove in Section \ref{lattice-point} the lattice point counting theorem, {\it i.e.} Theorem \ref{lattice-thm}, and in Section \ref{spec-to-lattice} the eigenvalue counting theorem, {\it i.e.} Theorem \ref{spectrum-thm}.


\section{Oscillatory Integral Results}\label{oscillatory-integral}

In this section we study the curve $\Gamma_g$ parametrized by $(t,
g(t))$ (with $g$ as defined in \eqref{def-g}) and provide both
estimates and asymptotic formulas of certain integral along
$\Gamma_g$ (see \eqref{key-integral} below).

The Gauss map, denoted by $\vec{n}$, maps each point $P\in \Gamma_g$
to a unit exterior normal $\vec{n}(P)\in S^1$. In particular if $P$
is not either endpoint then $\vec{n}(P)$ is in the cone
\begin{equation}
\mathfrak{C}_0:=\{(x_1, x_2)\in \mathbb{R}^2 : x_1, x_2>0,
1<x_2/x_1<\infty\}.\label{key-cone}
\end{equation}
For each $\xi\in \mathfrak{C}_0$ there exists a unique point
$x(\xi)=(t(\xi), g(t(\xi)))\in \Gamma_g$ where the normal is along
$\xi$. Denote by $K_{\xi}$ the curvature of the curve at that point,
whose value is nonzero, bounded from below, and very large as
$x(\xi)$ approaches either endpoint.

By Taylor's formula it is easy to prove that

\begin{lemma}\label{angle-curv} Let $P_1(-1,1)$ and $P_2(1,0)$ be endpoints of the curve
$\Gamma_g$. There exist two positive constants $C$ and $C'$ such
that if $P\in \Gamma_g$ is sufficiently close to $P_i$ ($i=1, 2$)
then
\begin{equation}
C/K_{\vec{n}(P)}\leq \mathfrak{A}_{\vec{n}(P), \vec{n}(P_i)} \leq
C'/K_{\vec{n}(P)}, \label{geometry2}
\end{equation}
where $\mathfrak{A}_{\vec{n}(P), \vec{n}(P_i)}$ denotes the angle in
$[0, \pi]$ between $\vec{n}(P)$ and $\vec{n}(P_i)$.
\end{lemma}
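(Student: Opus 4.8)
The plan is to reduce the statement to elementary one‑variable calculus, exploiting the special form of $g$, and then to Taylor‑expand at each cusp.

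First I would record that differentiating \eqref{def-g} gives the remarkably simple expressions
\[
g'(t)=-\frac{\arccos t}{\pi},\qquad g''(t)=\frac{1}{\pi\sqrt{1-t^2}},\qquad t\in(-1,1),
\]
so that $g'\in(-1,0)$ on $(-1,1)$ with $g'(-1)=-1$ and $g'(1)=0$, while $g''$ blows up like $(1-t^2)^{-1/2}$ at the endpoints. Since $\Gamma_g$ bounds $\Omega$ from above, the exterior unit normal at $P=(t,g(t))$ is $\vec{n}(P)=(-g'(t),1)/\sqrt{1+g'(t)^2}$; writing $\vec{n}(P)=(\cos\phi(t),\sin\phi(t))$ and using $g'<0$ one obtains the explicit formula
\[
\phi(t)=\frac{\pi}{2}-\arctan\!\Big(\frac{\arccos t}{\pi}\Big),
\]
which is strictly increasing on $[-1,1]$ with $\phi(-1)=\pi/4$ and $\phi(1)=\pi/2$ --- precisely the normal angles at $P_1$ and $P_2$. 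Because the total variation of $\phi$ over $[-1,1]$ is only $\pi/4<\pi$, for $P$ near $P_i$ the angle $\mathfrak{A}_{\vec{n}(P),\vec{n}(P_i)}\in[0,\pi]$ is simply $|\phi(t)-\phi(t_i)|$, where $t_1=-1$ and $t_2=1$. Finally, the standard curvature formula yields
\[
K_{\vec{n}(P)}=\frac{g''(t)}{(1+g'(t)^2)^{3/2}}=\frac{1}{\pi\sqrt{1-t^2}\,\bigl(1+(\arccos t)^2/\pi^2\bigr)^{3/2}}.
\]

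It then remains to compare $|\phi(t)-\phi(t_i)|$ with $1/K_{\vec{n}(P)}$ as $t\to t_i$. I would substitute $t=\cos\alpha$, so that $\arccos t=\alpha$ and $\sqrt{1-t^2}=\sin\alpha$, and expand in the relevant small parameter. Near $P_2$ one has $\alpha\to 0^+$, with $\phi(1)-\phi(t)=\arctan(\alpha/\pi)=\alpha/\pi+O(\alpha^3)$ and $1/K_{\vec{n}(P)}=\pi\sin\alpha\,\bigl(1+O(\alpha^2)\bigr)=\pi\alpha+O(\alpha^3)$, so $\mathfrak{A}_{\vec{n}(P),\vec{n}(P_2)}\,K_{\vec{n}(P)}\to 1/\pi^2$. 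Near $P_1$ one has $\alpha\to\pi^-$; writing $\alpha=\pi-\beta$ with $\beta\to 0^+$, the mean value theorem for $\arctan$ at $1$ gives $\phi(t)-\phi(-1)=\arctan 1-\arctan(1-\beta/\pi)=\beta/(2\pi)+O(\beta^2)$, while $\sqrt{1-t^2}=\sin\beta=\beta+O(\beta^3)$ and $(1+g'(t)^2)^{3/2}\to 2^{3/2}$, so $1/K_{\vec{n}(P)}=2^{3/2}\pi\beta+O(\beta^2)$ and $\mathfrak{A}_{\vec{n}(P),\vec{n}(P_1)}\,K_{\vec{n}(P)}\to 1/(2^{5/2}\pi^2)$. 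In both cases the product $\mathfrak{A}_{\vec{n}(P),\vec{n}(P_i)}\,K_{\vec{n}(P)}$ converges to a strictly positive constant; hence for $P$ sufficiently close to $P_i$ it is squeezed between, say, half and twice that constant, which is exactly \eqref{geometry2}.

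There is no genuine obstacle here: the argument is entirely elementary once the explicit expressions for $g'$, $g''$, $\phi$ and $K_{\vec{n}(P)}$ are available. The only points that need a little care are checking that the angle in $[0,\pi]$ between the two unit normals really equals $|\phi(t)-\phi(t_i)|$ (which follows from the monotonicity and the small total variation of $\phi$), and keeping track of the fact that the two cusps are not symmetric --- at $P_2$ the tangent is horizontal ($g'=0$) whereas at $P_1$ it has slope $-1$ ($g'=-1$) --- which is why the two limiting constants differ by the factor $2^{5/2}$.
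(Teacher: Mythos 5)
Your proof is correct, and it follows exactly the route the paper indicates (the paper merely states that the lemma follows "by Taylor's formula" and gives no details): you compute $g'=-\arccos(t)/\pi$ and $g''=1/(\pi\sqrt{1-t^2})$, express both the normal angle and the curvature explicitly, and Taylor‑expand at each cusp via $t=\cos\alpha$ to show the product $\mathfrak{A}_{\vec{n}(P),\vec{n}(P_i)}\,K_{\vec{n}(P)}$ tends to a positive constant. All the computations check out, including the asymmetry between the two cusps.
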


Then it is readily to get that

\begin{lemma}\label{curv-in-ball} There exist positive constants $c_1$, $A$, and $A'$ such that for
any $\xi\in\mathfrak{C}_0$ we have
\begin{equation*}
A K_\xi \leq K_\eta \leq A' K_\xi    \qquad \textrm{whenever
$\eta\in \mathfrak{C}_0$ and $\mathfrak{A}_{\eta, \xi}\leq
c_1/K_\xi$}.
\end{equation*}
\end{lemma}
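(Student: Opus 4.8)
The plan is to split into two regimes according to whether the contact point $x(\xi)=(t(\xi),g(t(\xi)))$ lies near one of the cusps $P_1,P_2$ or stays in the interior of the arc $\Gamma_g$, and then to patch the two estimates. I would first fix $\delta>0$ small enough that $B(P_1,\delta)$ and $B(P_2,\delta)$ are disjoint, that Lemma \ref{angle-curv} applies on $\Gamma_g\cap B(P_i,\delta)$, and that (by continuity of the Gauss map) $x(\xi)\in B(P_i,\delta)$ forces $\mathfrak A_{\xi,\vec n(P_i)}$ to be small; the constant $c_1$ will be chosen afterwards.

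\emph{Interior regime.} If $x(\xi)$ avoids $B(P_1,\delta)\cup B(P_2,\delta)$, then $\xi$ ranges over a closed subcone $\mathfrak C_\delta$ of $\mathfrak C_0$ whose closure misses the two bounding rays of $\mathfrak C_0$ (the directions $\vec n(P_1),\vec n(P_2)$). Since $g$ is smooth on $(-1,1)$ with nowhere vanishing curvature, the Gauss map is a diffeomorphism there and $\xi\mapsto K_\xi$ is continuous and strictly positive on $\overline{\mathfrak C_\delta}$, hence $K_\xi\asymp 1$ uniformly. Because $K_\xi$ is moreover bounded below on all of $\mathfrak C_0$, choosing $c_1$ small makes $c_1/K_\xi$ uniformly small, so any admissible $\eta$ lies in a slightly larger closed subcone still bounded away from $\vec n(P_1),\vec n(P_2)$, on which likewise $K_\eta\asymp 1$. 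This yields $A_0 K_\xi\le K_\eta\le A_0' K_\xi$ with absolute constants $A_0,A_0'$.

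\emph{Cusp regime.} Suppose $x(\xi)\in B(P_i,\delta)$. Lemma \ref{angle-curv} gives $C/K_\xi\le \mathfrak A_{\xi,\vec n(P_i)}\le C'/K_\xi$. Take $c_1<C$. If $\eta\in\mathfrak C_0$ and $\mathfrak A_{\eta,\xi}\le c_1/K_\xi$, the triangle inequality for the angular distance on $S^1$ gives
\[
\frac{C-c_1}{K_\xi}\ \le\ \mathfrak A_{\eta,\vec n(P_i)}\ \le\ \frac{C'+c_1}{K_\xi}.
\]
Shrinking $\delta$ and $c_1$ if necessary, the uniform lower bound on $K_\xi$ together with continuity of the inverse Gauss map near $\vec n(P_i)$ ensures that $x(\eta)$ is again close enough to $P_i$ for Lemma \ref{angle-curv} to apply to $\eta$: $C/K_\eta\le \mathfrak A_{\eta,\vec n(P_i)}\le C'/K_\eta$. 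Combining this with the previous display,
\[
\frac{C}{C'+c_1}\,K_\xi\ \le\ K_\eta\ \le\ \frac{C'}{C-c_1}\,K_\xi.
\]
Finally, taking $A=\min\big(A_0,\,C/(C'+c_1)\big)$, $A'=\max\big(A_0',\,C'/(C-c_1)\big)$, and $c_1$ equal to the smallest of the thresholds used above (and $<C$) completes the proof.

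I do not expect any genuine obstacle here: all of the analytic content is already packaged in Lemma \ref{angle-curv} and in the smoothness of $g$ on the open interval. The only point requiring care is the patching — one must fix the cusp-neighborhood size before choosing $c_1$, and then verify that the $(c_1/K_\xi)$-angular ball around $\xi$ cannot leave the regime that $\xi$ belongs to. In the interior regime this is guaranteed by the uniform lower bound on $K_\xi$, and in the cusp regime by the displayed triangle-inequality estimate, which keeps $x(\eta)$ inside the same cusp neighborhood.
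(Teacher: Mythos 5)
Your argument is correct; the paper gives no proof of this lemma at all (it is stated as an immediate consequence of Lemma \ref{angle-curv}), and your derivation --- Lemma \ref{angle-curv} plus the angular triangle inequality near the cusps, compactness of the Gauss map away from them, and patching the two regimes --- is exactly the intended route. The one point worth making explicit is that the two regimes must genuinely overlap (e.g.\ run the interior argument for $x(\xi)\notin B(P_i,\delta/2)$ while arranging Lemma \ref{angle-curv} to hold on $B(P_i,2\delta)$), which the uniform lower bound on $K_\xi$ and the uniform continuity of the inverse Gauss map on the closed arc, both of which you invoke, already guarantee.
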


For $\mu >1$ we next study the following oscillatory integral
associated with $\Gamma_g$
\begin{equation}
I(\mu, \xi):=\int_{-1}^{1} e^{-i\mu (\xi_1 t+\xi_2
g(t))}\left(\xi_2-\xi_1 g'(t) \right)
\,\textrm{d}t.\label{key-integral}
\end{equation}

We first have an estimate of $I(\mu, \xi)$ which was essentially
obtained (though not explicitly stated) in Colin de
Verdi\`{e}re~\cite{colin}'s Lemma 4.3. To prove it not much
essential change is needed of Colin de Verdi\`{e}re's original
proof. We refer interested readers to standard textbooks in harmonic
analysis for more similar results.

\begin{proposition}\label{estimate-key-integral}
There exist a positive constant $C$ and neighborhoods of points
$(1/\sqrt{2}, 1/\sqrt{2})$ and $(0, 1)$ such that if $\xi\in
S^1\cap\mathfrak{C}_0$ is in either neighborhood then
\begin{equation*}
\left|I(\mu, \xi)\right|\lesssim \left\{ \begin{array}{ll}
\mu^{-2/3}              & \textrm{if $\mu\leq C K_{\xi}^3$}\\
\mu^{-1/2}K_{\xi}^{-1/2} & \textrm{if $\mu\geq C K_{\xi}^3$}
\end{array}.\right.
\end{equation*}
\end{proposition}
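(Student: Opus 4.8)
The plan is to estimate $I(\mu,\xi)$ by the standard method of stationary phase applied to the phase $\phi(t)=\xi_1 t+\xi_2 g(t)$, keeping careful track of how the curvature $K_\xi$ enters. First I would locate the critical point. Since $\phi'(t)=\xi_1+\xi_2 g'(t)$ and $\xi\in S^1\cap\mathfrak C_0$ is the unit exterior normal at exactly one point $x(\xi)=(t_0,g(t_0))\in\Gamma_g$, the critical point is $t_0=t(\xi)$, which is unique and nondegenerate away from the cusps. A short computation (using that the normal direction is proportional to $(-g'(t),1)$) gives $\phi''(t_0)=\xi_2 g''(t_0)$, and the standard relation between the second fundamental form and curvature shows $|\phi''(t_0)|\asymp K_\xi$ (up to the factor $(1+g'(t_0)^2)^{3/2}$ which is bounded and bounded away from zero on the relevant neighborhoods, since there $\xi$ stays in a compact subcone of $\mathfrak C_0$). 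The amplitude $a(t)=\xi_2-\xi_1 g'(t)$ satisfies $a(t_0)=0$ — this is the key structural feature: $a(t_0)$ is, up to a bounded nonvanishing factor, a constant multiple of $\phi'(t_0)$... more precisely $a(t_0)$ vanishes because $(\xi_1,\xi_2)\parallel(-g'(t_0),1)$ forces $\xi_2=\xi_1/(-g'(t_0))\cdot$, hmm — in any case one checks $a(t)=O(|t-t_0|)$ near $t_0$, so the amplitude has a zero at the critical point, which improves the stationary-phase gain by one half-power.

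The proof then splits into two regimes according to the size of $\mu$ relative to $K_\xi^3$. When $\mu\ge CK_\xi^3$ the oscillation dominates: on a neighborhood of $t_0$ of size comparable to $K_\xi^{-1}$ (where $\phi''$ is comparable to its value at $t_0$, by a Taylor estimate and Lemma~\ref{curv-in-ball}-type control), the stationary phase formula with a vanishing amplitude yields a contribution of size $\asymp a'(t_0)\,|\phi''(t_0)|^{-1}\cdot|\phi''(t_0)|^{-1/2}\mu^{-1/2}\cdot|\phi''(t_0)|$, which after bookkeeping collapses to $\mu^{-1/2}K_\xi^{-1/2}$. Away from $t_0$, repeated integration by parts in $t$ using the lower bound $|\phi'(t)|\gtrsim K_\xi|t-t_0|$ (valid because $\phi''$ has a sign and size $\asymp K_\xi$ on the window, and $g''$ does not degenerate until one is genuinely near a cusp) shows the tails are lower order; one must be slightly careful near the cusps $t=\pm1$ where $g'$ and $g''$ blow up, but there the factor $(\xi_2-\xi_1 g'(t))$ together with the rapidly growing $|\phi'|$ makes those pieces negligible — this is essentially Colin de Verdière's Lemma 4.3 argument, which I would cite. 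When $\mu\le CK_\xi^3$ one cannot afford the full $K_\xi^{-1/2}$ gain because the curvature is too large relative to the frequency; instead one uses the crude but uniform Van der Corput-type bound: on the whole interval $\phi''$ has size at least $\asymp K_\xi$ near $t_0$ but one only extracts $\mu^{-1/2}|\phi''|^{-1/2}$ up to the point where $|t-t_0|\sim\mu^{-1/3}$, and trivially estimates the remaining short interval $|t-t_0|\lesssim\mu^{-1/3}$ where the amplitude is $O(\mu^{-1/3})$, giving a total of $O(\mu^{-1/3}\cdot\mu^{-1/3})=O(\mu^{-2/3})$; one checks the threshold $\mu\asymp K_\xi^3$ is exactly where the two bounds $\mu^{-2/3}$ and $\mu^{-1/2}K_\xi^{-1/2}$ match.

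Concretely the steps I would carry out are: (1) fix the two neighborhoods — one around $(1/\sqrt2,1/\sqrt2)$, which is an interior direction where $K_\xi$ is bounded above and below and the estimate is the classical van der Corput $O(\mu^{-2/3})$, so here nothing with $K_\xi$ is really needed; and one around $(0,1)$, the normal direction at the cusp $P_2(1,0)$, where $K_\xi\to\infty$ and the real content lies; (2) change variables to make $t_0$ the origin and record the Taylor expansions $\phi(t_0+\tau)=\phi(t_0)+\tfrac12\phi''(t_0)\tau^2+O(K_\xi\tau^3+\dots)$ and $a(t_0+\tau)=a'(t_0)\tau+O(\dots)$, with $|a'(t_0)|$ and $|\phi''(t_0)|$ both $\asymp K_\xi$; (3) split $\int=\int_{|\tau|\le\delta}+\int_{|\tau|>\delta}$ with $\delta\asymp\min(K_\xi^{-1},\mu^{-1/3})$ and apply stationary phase (with vanishing amplitude) on the inner piece and integration by parts on the outer piece; (4) assemble and compare with the threshold. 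The main obstacle I expect is the second regime $\mu\le CK_\xi^3$: here one is in the range where the critical point's curvature window $K_\xi^{-1}$ is \emph{smaller} than the stationary-phase scale $\mu^{-1/2}K_\xi^{1/2}\cdot$(no) — rather, the honest difficulty is controlling the error terms in the Taylor expansion of $\phi$ uniformly as $K_\xi\to\infty$, because the higher derivatives $g^{(k)}(t)$ near the cusp grow like $K_\xi^{\,?}$ and one must verify that on the relevant window $|\tau|\lesssim\delta$ these cubic-and-higher terms remain a genuine perturbation of the quadratic term; this requires precise asymptotics for $g$ and its derivatives near $t=1$ (which follow from $g(t)=\frac1\pi(\sqrt{1-t^2}-t\arccos t)$ by elementary but somewhat delicate Taylor analysis at the branch point $t=1$), and is exactly the point where Colin de Verdière's original computation, suitably reorganized to track $K_\xi$ explicitly, must be invoked.
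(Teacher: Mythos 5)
The paper does not actually prove this proposition; it attributes it to Colin de Verdi\`ere's Lemma 4.3 and refers the reader there. Your overall plan (stationary phase near $t(\xi)$, integration by parts away from it, threshold at $\mu\asymp K_\xi^3$) has the right shape, but it rests on a false structural claim: the amplitude $a(t)=\xi_2-\xi_1 g'(t)$ does \emph{not} vanish at the critical point. Indeed $a(t)=\langle\xi,(-g'(t),1)\rangle$ is the pairing of $\xi$ with the (unnormalized) normal, whereas $\phi'(t)=\xi_1+\xi_2g'(t)=\langle\xi,(1,g'(t))\rangle$ is the pairing with the tangent; at $t_0=t(\xi)$ the latter vanishes but the former is maximal, namely $a(t_0)=\sqrt{1+g'(t_0)^2}=1/\xi_2\in[1,\sqrt{2}\,]$. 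You have conflated the two. This is confirmed by the paper's Proposition \ref{asym-R^2}, whose nonzero leading term $(2\pi)^{1/2}e^{-\pi i/4}K_\xi^{-1/2}\mu^{-1/2}e^{-i\mu\langle\xi,x(\xi)\rangle}$ is exactly ordinary nondegenerate stationary phase with $|\phi''(t_0)|=\xi_2|g''(t_0)|=K_\xi\xi_2^{-2}$ and amplitude $1/\xi_2$: there is no extra half-power gain from a vanishing amplitude, and none is needed to reach $\mu^{-1/2}K_\xi^{-1/2}$.

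The error is fatal in the regime $\mu\leq CK_\xi^3$: your trivial estimate of the window $|t-t_0|\lesssim\mu^{-1/3}$ uses $a(t)=O(|t-t_0|)$ to claim a contribution $O(\mu^{-1/3}\cdot\mu^{-1/3})$; with the correct bounded amplitude it only gives $O(\mu^{-1/3})$, missing $\mu^{-2/3}$ by a full power $\mu^{1/3}$. The correct mechanism is the classical van der Corput balancing against the \emph{endpoints}: since $g''(t)=1/(\pi\sqrt{1-t^2})$, one has $|\phi''(t)|=\xi_2 g''(t)\gtrsim(1-t^2)^{-1/2}\geq\delta^{-1/2}$ on $[-1+\delta,1-\delta]$, so the second-derivative test gives $O(\mu^{-1/2}\delta^{1/4})$ there, the two endpoint strips contribute $O(\delta)$ trivially, and $\delta=\mu^{-2/3}$ balances. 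Two further corrections: $(1/\sqrt{2},1/\sqrt{2})$ is not an ``interior direction where $K_\xi$ is bounded above and below''---it is the normal direction at the cusp $P_1(-1,1)$, so \emph{both} neighborhoods in the statement are near-cusp regimes with $K_\xi\to\infty$; and near $(0,1)$ one has $\xi_1\to0$, so $|a'(t_0)|=\xi_1|g''(t_0)|$ is not $\asymp K_\xi$ uniformly. What is genuinely right in your write-up, and worth keeping, is the identification of the threshold $\mu\asymp K_\xi^3$ as the point where the stationary-phase scale $(\mu K_\xi)^{-1/2}$ fits inside the curvature window of width $\asymp K_\xi^{-2}$ around $t(\xi)$, equivalently where the bounds $\mu^{-2/3}$ and $\mu^{-1/2}K_\xi^{-1/2}$ coincide.
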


Using this result one can obtain (as in \cite{colin}) the bound
$O(\mu^{2/3})$ of the remainder $P_{\Omega}(\mu)$ of the lattice
point problem.

We actually have asymptotics of $I(\mu, \xi)$, which will be needed
later when we prove a better bound.

\begin{proposition}\label{asym-R^2}
For any $\xi\in S^1\cap\mathfrak{C}_0$ we have
\begin{equation}
\begin{split}
I(\mu, \xi)&=(2\pi)^{1/2}e^{-\pi i/4} K_{\xi}^{-1/2}e^{-i\mu
\langle \xi, x(\xi)\rangle}\mu^{-1/2}+O\left(K_{\xi}^{5/2}\mu^{-3/2}\right)\\
&\quad\quad
+O\left(K_{\xi}\mu^{-1}\right)+O\left(\left(\left|\frac{\xi_2}{\xi_1}\right|+\left|\frac{\xi_2+\xi_1}{\xi_2-\xi_1}\right|\right)\mu^{-1}
\right).
\end{split}\label{asym-formula}
\end{equation}
\end{proposition}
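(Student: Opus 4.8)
The plan is to evaluate $I(\mu,\xi)$ by the method of stationary phase, being careful about how the error terms depend on the curvature $K_\xi$, which may be large near the cusps. Write the phase as $\phi(t) = \xi_1 t + \xi_2 g(t)$, so that $\phi'(t) = \xi_1 + \xi_2 g'(t)$ and the amplitude is exactly $-\phi'(t)/\xi_2 \cdot \xi_2 = \xi_2 - \xi_1 g'(t)$; note $g'(t) = -\arccos(t)/\pi$, which is decreasing, so $\phi'$ vanishes at a unique $t_0 = t(\xi) \in (-1,1)$ — this is the point where the normal to $\Gamma_g$ points in the direction $\xi$. At that point $\phi(t_0) = \langle \xi, x(\xi)\rangle$ and $\phi''(t_0) = \xi_2 g''(t_0)$, whose size is comparable to $K_\xi$ (up to the harmless factor $|\xi|=1$ and the slope of the normal, which is bounded away from $0$ and $\infty$ on $\mathfrak C_0$; near the cusps $g''$ blows up, reflecting large curvature).

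First I would localize: split $I(\mu,\xi)$ into the contribution of a neighborhood $|t-t_0| \le \delta$ of the stationary point and the rest. On the complementary piece $\phi'$ is bounded below (by something like $\delta K_\xi$ after accounting for the convexity), and repeated integration by parts using $\tfrac{1}{-i\mu\phi'}\tfrac{d}{dt}$ gives a contribution that is $O((\mu \delta K_\xi)^{-N})$ for any $N$, plus boundary terms at $t = \pm 1$. The boundary terms are exactly where the $\arccos$ singularity and the factors $\xi_2/\xi_1$, $(\xi_2+\xi_1)/(\xi_2-\xi_1)$ enter: at $t=1$ the amplitude $\xi_2 - \xi_1 g'(1) = \xi_2$ contributes a term of size $|\xi_2/\xi_1| \mu^{-1}$ after one integration by parts (since $\phi'(1) = \xi_1 + \xi_2 g'(1) = \xi_1$), and at $t=-1$ the amplitude is $\xi_2 + \xi_1$ while $\phi'(-1) = \xi_1 - \xi_2$, producing the term $|(\xi_2+\xi_1)/(\xi_2-\xi_1)|\mu^{-1}$. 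These are collected into the last $O(\cdot)$ in \eqref{asym-formula}.

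On the localized piece I would apply the standard stationary phase expansion with explicit control of the remainder. The leading term is
\[
\frac{\sqrt{2\pi}\,e^{-i\operatorname{sgn}(\phi''(t_0))\pi/4}}{\sqrt{\mu|\phi''(t_0)|}}\, a(t_0)\, e^{-i\mu\phi(t_0)},
\]
where $a(t) = \xi_2 - \xi_1 g'(t)$. Since $\phi''(t_0) = \xi_2 g''(t_0) > 0$ (as $g$ is concave, $g'' < 0$, wait — one must track the sign: $g'(t) = -\arccos(t)/\pi$ so $g''(t) = 1/(\pi\sqrt{1-t^2}) > 0$, hence $\phi''(t_0) = \xi_2 g''(t_0) > 0$, so $\operatorname{sgn} = +1$ and we get $e^{-i\pi/4}$), and since $a(t_0) = \phi''(t_0) \cdot (\text{something})$... more precisely one checks that the geometric identity $|a(t_0)|/\sqrt{|\phi''(t_0)|}$ combines with $|\xi|=1$ to yield exactly $K_\xi^{-1/2}$ — this is the computation relating the curvature $K_\xi$ of $\Gamma_g$ to the second derivative of the phase, namely $K_\xi = |g''(t_0)| / (1 + g'(t_0)^2)^{3/2}$ together with $\xi = (1,-g'(t_0))/\sqrt{1+g'(t_0)^2}$. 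Plugging in gives the stated main term $\sqrt{2\pi}\,e^{-\pi i/4} K_\xi^{-1/2} e^{-i\mu\langle\xi,x(\xi)\rangle}\mu^{-1/2}$.

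The remainder in the stationary phase formula, after rescaling $t - t_0 = s/\sqrt{\mu|\phi''(t_0)|}$, is governed by derivatives of the amplitude and the higher Taylor coefficients of the phase; tracking the dependence on $K_\xi \asymp |\phi''(t_0)|$ one finds the next-order term is $O(K_\xi^{5/2}\mu^{-3/2})$, coming from the cube of the third derivative of $\phi$ (or square of the fourth) over $(\mu|\phi''|)^{3/2} \cdot |\phi''|$, and an $O(K_\xi \mu^{-1})$ term from the endpoints of the localized interval matching the integration-by-parts tail. Choosing $\delta \asymp 1/K_\xi$ (so that Lemma \ref{curv-in-ball} guarantees the curvature stays comparable throughout the localization window) makes all the competing error contributions consistent and absorbs the $(\mu\delta K_\xi)^{-N}$ tail.

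The main obstacle I anticipate is the bookkeeping of $K_\xi$-dependence in the stationary phase remainder: one must bound $\partial_t^k \phi$ and $\partial_t^k a$ near $t_0$ uniformly in $\xi$, with the correct powers of $K_\xi$, and this requires using the quantitative comparison of curvatures from Lemma \ref{curv-in-ball} (and the two-sided bound on the normal angle from Lemma \ref{angle-curv}) to know that the $\delta \asymp K_\xi^{-1}$ window is the right scale and that $g$ behaves like a perturbed quadratic there even as $t_0$ nears $\pm 1$. Near the cusps one has $g''(t) \asymp (1-|t|)^{-1/2}$, so some care with the $\arccos$ Taylor expansion at the endpoints is needed to confirm the exponents $5/2$ and $1$; once the derivative bounds are in hand, the rest is the routine stationary phase machinery.
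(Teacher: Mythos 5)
Your overall architecture is the same as the paper's: split off a neighborhood of the stationary point $t(\xi)$, integrate by parts on the complement (with the boundary contributions at $t=\pm1$ producing the last error term, exactly as you computed), and run stationary phase on the localized piece while tracking powers of $K_\xi$. Your identification of the leading coefficient $a(t_0)/\sqrt{|\phi''(t_0)|}=K_\xi^{-1/2}$ is also correct, modulo the slip that $\xi=(-g'(t_0),1)/\sqrt{1+g'(t_0)^2}$ rather than $(1,-g'(t_0))/\sqrt{1+g'(t_0)^2}$ (one needs $\xi_2/\xi_1>1$ on $\mathfrak{C}_0$).

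The genuine gap is the localization scale. You take a $t$-window of half-width $\delta\asymp K_\xi^{-1}$ and assert that Lemma \ref{curv-in-ball} keeps the curvature comparable there. That lemma gives comparability on an \emph{angular} window of size $c_1/K_\xi$ for the normals; since the normal turns at rate $\asymp K_\xi$ per unit of $t$ near $t(\xi)$, the corresponding $t$-window has size $\asymp K_\xi^{-2}$, not $K_\xi^{-1}$. The distinction is fatal precisely near the cusps, where the proposition is delicate: there $g''(t)\asymp(1-|t|)^{-1/2}$, so $K_\xi\asymp(1-|t(\xi)|)^{-1/2}$ and $\operatorname{dist}(t(\xi),\pm1)\asymp K_\xi^{-2}$. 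Hence your window of width $K_\xi^{-1}$ swallows the endpoint, the curvature blows up inside it, the uniform derivative bounds needed for the stationary-phase remainder fail, and the claimed lower bound $|\phi'|\gtrsim\delta K_\xi$ on the complement (which tacitly uses $\phi''\asymp K_\xi$ throughout) also fails. The paper localizes at $|t-t(\xi)|\lesssim K_\xi^{-2}$ (equivalently $|\vec n-\xi|\lesssim K_\xi^{-1}$), where $g^{(k)}\asymp K_\xi^{2k-3}$ uniformly; after rescaling $u=K_\xi^{2}(t-t(\xi))$ every $u$-derivative of the phase of order $\ge 2$ is uniformly $O(K_\xi^{-3})$, a further substitution $v=u(1+\varepsilon)^{1/2}$ makes the phase exactly quadratic, and H\"ormander's stationary-phase lemma with large parameter $\mu K_\xi^{-3}$ then yields both the main term and the $O(K_\xi^{5/2}\mu^{-3/2})$ error cleanly. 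With $\delta\asymp K_\xi^{-2}$ in place of $K_\xi^{-1}$ your argument becomes essentially the paper's; as written, the key quantitative step does not go through.
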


\begin{proof}
Fix an arbitrary $\xi\in S^1\cap\mathfrak{C}_0$. Let $\chi_0$ be a
fixed smooth cut-off function whose value is $1$ on $B(0, 1)$ and
$0$ on the complement of $B(0, 2)$. By adding a partition of unity
$1\equiv\chi_1+\chi_2$ (to the integrand) where
\begin{equation*}
\chi_1(t, \xi)=\chi_0\left(\frac{|\vec{n}((t, g(t)))-\xi|}{c
K_{\xi}^{-1}} \right)  \textrm{ and } \chi_2(t, \xi)=1-\chi_1(t,
\xi)
\end{equation*}
with $c>0$ to be determined below, we get
\begin{equation*}
I(\mu, \xi)=I_1(\mu, \xi)+I_2(\mu, \xi),
\end{equation*}
where
\begin{equation*}
I_j(\mu, \xi):=e^{-i\mu \langle \xi, x(\xi)\rangle}\int_{-1}^{1}
e^{-i\mu h(t,\xi)}\chi_j(t, \xi)(\xi_2-\xi_1 g'(t)) \,\textrm{d}t
\end{equation*}
with $h(t,\xi):=\langle \xi, (t, g(t))-x(\xi)\rangle$.

Note that if $c$ is sufficiently small (say, $c<c_1/\pi$ with $c_1$
appearing in the statement of Lemma \ref{curv-in-ball}) Lemma
\ref{curv-in-ball} ensures that in the $t$-support of $\chi_1(t,
\xi)$ the curvature of the corresponding curve $\Gamma_g$ is $\asymp
K_{\xi}$. Hence $\partial_t (\chi_1(t, \xi))\lesssim K_{\xi}^2$ if
$1\leq |\vec{n}((t, g(t)))-\xi|/(c K_{\xi}^{-1})\leq 2$; $=0$
otherwise. The mean value theorem implies that
\begin{equation*}
\{t\in (-1, 1) : |\vec{n}((t, g(t)))-\xi|/(c K_{\xi}^{-1})\leq
2\}\subset B(t(\xi), 2A^{-1}c K_{\xi}^{-2})
\end{equation*}
for some constant $A$ (appearing in the statement of Lemma
\ref{curv-in-ball}). Also we get $|\partial_t h(t,\xi)|\gtrsim
K_{\xi}^{-1}$ if $|\vec{n}((t, g(t)))-\xi|/(c K_{\xi}^{-1}) \geq 1$.

For $I_2(\mu, \xi)$, after one integration by parts the boundary
terms for $t=\pm 1$ lead to the last bound in \eqref{asym-formula}
while the integral term is of size $O(K_{\xi}\mu^{-1})$.

To $I_1(\mu, \xi)$ we first apply a substitution
$u=K_{\xi}^{2}(t-t(\xi))$ and get
\begin{equation*}
I_1(\mu, \xi)=e^{-i\mu \langle \xi, x(\xi)\rangle}K_{\xi}^{-2}\int
e^{-i\mu h\left(t(\xi)+K_{\xi}^{-2}u,\xi\right)} \tau(u, \xi)
\,\textrm{d}u,
\end{equation*}
where
\begin{equation*}
\tau(u, \xi):=\chi_1\left(t(\xi)+K_{\xi}^{-2}u,
\xi\right)\left(\xi_2-\xi_1 g'(t(\xi)+K_{\xi}^{-2}u)\right)
\end{equation*}
and $u$-support of $\tau$ is contained in $B(0, 2A^{-1}c)$. By
Taylor's formula the phase function becomes
\begin{equation*}
h\left(t(\xi)+K_{\xi}^{-2}u,\xi\right)=K_{\xi}^{-3}\xi_2^{-2}u^2(1+\varepsilon(u,
\xi))/2,
\end{equation*}
where
\begin{equation*}
\varepsilon(u, \xi):=K_{\xi}^{-3}\xi_2^{3}u \int_0^1 g'''\left(
t(\xi)+K_{\xi}^{-2}us\right)(1-s)^2 \,\textrm{d}s.
\end{equation*}
Observe that $|\varepsilon(u, \xi)|\lesssim |u|$  in the $u$-support
of $\tau$. Hence $1/2\leq 1+\varepsilon(u, \xi)\leq 3/2$ if $c$ is
sufficiently small. Define $v=u(1+\varepsilon(u, \xi))^{1/2}$. If
$c$ is sufficiently small then $|\partial_u v|\asymp 1$ and
$|\partial_v u|\asymp 1$. Thus $|\partial_u^l v|\lesssim 1$ and
$|\partial_v^l u|\lesssim 1$. By this change of variable we get
\begin{equation*}
I_1(\mu, \xi)=e^{-i\mu \langle \xi, x(\xi)\rangle}K_{\xi}^{-2}\int
e^{-i\mu K_{\xi}^{-3}\xi_2^{-2}v^2/2} \tau(u(v), \xi)\partial_v u
\,\textrm{d}v.
\end{equation*}
Applying the method of stationary phase (say, H\"ormander's
\cite[Lemma 7.7.3]{hormander}) to the integral above yields the
leading term and the first error term on the right hand side of
\eqref{asym-formula}. This finishes the proof.
\end{proof}


\section{Estimate of Exponential Sums}\label{expsumsec}

Let $M>1$ and $T>0$ be parameters. In this section we consider
exponential sums of the form
\begin{equation*}
S(T, M; G, F)=\sum_{m\in\mathbb{Z}^2} G(m/M) e(TF(m/M)),
\end{equation*}
where $G:\mathbb{R}^2\rightarrow\mathbb{R}$ is $C^{\infty}$ smooth,
compactly supported, and bounded above by a constant, and
$F:\mathcal{D}\subset \mathbb{R}^2 \rightarrow\mathbb{R}$ is
$C^{\infty}$ smooth on an open convex domain $\mathcal{D}$ such that
\begin{equation*}
\textrm{supp} (G)\subset \mathcal{D} \subset c_0 B(0, 1),
\end{equation*}
where $c_0>0$ is a fixed constant.

The following lemma is M\"{u}ller's \cite[Lemma 1]{mullerII} (in a
slightly different form), namely the so-called iterated
one-dimensional Weyl--Van der Corput inequality.

\begin{lemma}\label{weyl-van-inequality}
Let $q\in \mathbb{N}$, $Q=2^q$, and $r_1, \ldots, r_q\in
\mathbb{Z}^2_*$ with $|r_i|\lesssim 1$. Furthermore, let $H$ be a
parameter which satisfies $1<H\lesssim M$. Set
$H_l=H_{q,l}=H^{2^{l-q}}$ for $l=1, \ldots, q$. Then
\begin{equation*}
\begin{split}
&|S(T, M; G, F)|^{Q} \\
&\quad\lesssim\frac{M^{2Q}}{H} + \frac{M^{2(Q-1)}}{H_1\cdot \ldots
\cdot H_q} \sum_{\substack{1\leq h_i<H_i \\1 \leq i \leq q}}
\left|S\left(\mathscr{H}TM^{-q}, M; G_q, F_q\right)\right|,
\end{split}
\end{equation*}
where $\mathscr{H}=\prod_{l=1}^{q}h_l$ and functions $G_q$ and $F_q$
are defined as follows:
\begin{equation*}
G_q(x)=G_q(x, h_1, \ldots, h_q)=\prod_{\substack{u_i\in\{0,1\}\\
1\leq i\leq q}} G\left(x+\sum_{l=1}^{q} \frac{h_l}{M} u_l r_l\right)
\end{equation*}
and
\begin{align*}
F_q(x)&=F_q(x, h_1, \ldots, h_q)\\
      &=\int_{(0,1)^q} \langle r_1, \nabla\rangle \cdots \langle r_q, \nabla\rangle F
       \left(x+\sum_{l=1}^{q} \frac{h_l}{M} u_l r_l\right) \, \textrm{d}u_1\ldots \textrm{d}u_q.
\end{align*}

The integral representation of $F_q$ is well defined on the open
convex set $\mathcal{D}_q=\mathcal{D}_q(h_1, \ldots, h_q)=\{x\in
\mathcal{D} : x+\sum_{l=1}^{q} (h_l/M) u_l r_l \in \mathcal{D}
\textrm{ for all } u_l\in \{0,1\}, l=1, \ldots, q\}$. Moreover,
$\supp(G_q)\subset \mathcal{D}_q\subset \mathcal{D}$.
\end{lemma}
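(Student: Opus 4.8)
The plan is to prove the inequality by induction on $q$, peeling off one differencing direction at a time and feeding the resulting (structurally identical) exponential sum back into the hypothesis. The engine is the single-step, vector-valued Weyl--Van der Corput inequality: if $a\colon\mathbb Z^2\to\mathbb C$ is supported on a set of $\lesssim M^2$ lattice points lying in a ball of radius $\lesssim M$, if $r\in\mathbb Z^2_*$ with $|r|\lesssim 1$, and if $1<H'\lesssim M$, then
\[
\Bigl|\sum_m a_m\Bigr|^2\lesssim\frac{M^2}{H'}\sum_m|a_m|^2+\frac{M^2}{H'}\sum_{1\le h<H'}\Bigl|\sum_m a_m\overline{a_{m+hr}}\Bigr|.
\]
This is the standard Weyl differencing estimate: one writes $H'\sum_m a_m=\sum_{h=0}^{H'-1}\sum_m a_{m+hr}=\sum_n\mathbf 1_{\mathcal R}(n)\sum_{h=0}^{H'-1}a_{n-hr}$, where $\mathcal R$ is the $O(M)$-thickening of $\supp(a)$ in the direction $r$ (still carrying $\lesssim M^2$ lattice points because $H'\lesssim M$), applies Cauchy--Schwarz, expands the square, and separates the diagonal ($h=h'$) from the off-diagonal terms.

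Applied with $a_m=G(m/M)e(TF(m/M))$ one has $\sum_m|a_m|^2\lesssim M^2$, so the first term above is $\lesssim M^4/H'$; moreover
\[
a_m\overline{a_{m+hr}}=G\!\left(\tfrac{m}{M}\right)G\!\left(\tfrac{m}{M}+\tfrac{h}{M}r\right)e\!\left(T\Bigl[F\!\left(\tfrac{m}{M}\right)-F\!\left(\tfrac{m}{M}+\tfrac{h}{M}r\right)\Bigr]\right),
\]
and by the fundamental theorem of calculus $F(x)-F(x+\tfrac{h}{M}r)=-\tfrac{h}{M}\int_0^1\langle r,\nabla\rangle F(x+\tfrac{h}{M}u\,r)\,\textrm{d}u$, so the inner sum is once more of the form $S(\pm hTM^{-1},M;G^{(1)},F^{(1)})$, with $G^{(1)}(x)=G(x)G(x+\tfrac{h}{M}r)$ and $F^{(1)}(x)=\int_0^1\langle r,\nabla\rangle F(x+\tfrac{h}{M}u\,r)\,\textrm{d}u$; the sign of the new $T$ is immaterial because $\overline{S(T,M;G,F)}=S(-T,M;G,F)$, $G$ being real-valued.

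For the inductive step I would peel off the direction $r_1$ with parameter $H_1=H^{2^{1-q}}$. The single-step inequality gives $|S|^2\lesssim M^4/H_1+(M^2/H_1)\sum_{1\le h_1<H_1}|S^{(h_1)}|$ with $S^{(h_1)}$ as above and its weight supported in $\mathcal D_1\subset\mathcal D$; raising to the power $Q/2=2^{q-1}$, using $(X+Y)^{Q/2}\lesssim X^{Q/2}+Y^{Q/2}$ and the power-mean inequality $\bigl(\sum_{h_1<H_1}|S^{(h_1)}|\bigr)^{Q/2}\le H_1^{Q/2-1}\sum_{h_1<H_1}|S^{(h_1)}|^{Q/2}$, and using $H_1^{Q/2}=H$, gives
\[
|S|^Q\lesssim\frac{M^{2Q}}{H}+\frac{M^Q}{H_1}\sum_{1\le h_1<H_1}|S^{(h_1)}|^{Q/2}.
\]
Each $S^{(h_1)}$ is an exponential sum of exactly the same shape, with $M$ unchanged, so the inductive hypothesis (for $q-1$ steps, directions $r_2,\dots,r_q$, parameters $H^{2^{l-(q-1)}}$ for $l=1,\dots,q-1$, which are precisely $H_2,\dots,H_q$) applies to $|S^{(h_1)}|^{Q/2}$. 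Substituting it, the diagonal contribution $\tfrac{M^Q}{H_1}\cdot H_1\cdot\tfrac{M^Q}{H}=\tfrac{M^{2Q}}{H}$ is absorbed into the first term, and the remaining term is $\tfrac{M^Q}{H_1}\cdot\tfrac{M^{Q-2}}{H_2\cdots H_q}\sum_{1\le h_i<H_i}|S(\mathscr H TM^{-q},M;G_q,F_q)|=\tfrac{M^{2(Q-1)}}{H_1\cdots H_q}\sum_{1\le h_i<H_i}|S(\mathscr H TM^{-q},M;G_q,F_q)|$, where $\mathscr H=\prod_l h_l$; here the accumulated phase coefficient $(\pm h_1TM^{-1})(h_2\cdots h_q)M^{-(q-1)}=\pm\mathscr HTM^{-q}$, and iterating the single-step weight transformation $q$ times produces $G_q(x)=\prod_{u\in\{0,1\}^q}G(x+\sum_l\tfrac{h_l}{M}u_lr_l)$ and, likewise, the phase $F_q$. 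The base case $q=1$ is the single-step inequality itself (with $H_1=H$).

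Finally I would record the domain bookkeeping: $\mathcal D_q$ is open (a finite intersection of translates of the open set $\mathcal D$), convex (an intersection of convex sets), and contained in $\mathcal D$ (take all $u_l=0$); and $\supp(G_q)\subset\mathcal D_q$, because if $x\notin\mathcal D_q$ then some translate $x+\sum_l\tfrac{h_l}{M}u_lr_l$ with $u_l\in\{0,1\}$ lies outside $\supp(G)\subset\mathcal D$, so that factor of $G_q(x)$ vanishes. Convexity of $\mathcal D_q$ guarantees that for $x\in\mathcal D_q$ the whole cube $\{x+\sum_l\tfrac{h_l}{M}u_lr_l:u\in[0,1]^q\}$ stays in $\mathcal D$, so the iterated integral defining $F_q$ is legitimate there; and $1<H\lesssim M$ keeps every intermediate shift of length $\lesssim 1$ and every thickened support carrying $\lesssim M^2$ lattice points (all implied constants may depend on $q$ and the $r_i$). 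The one point that needs genuine care rather than bookkeeping is the calibration $H_l=H^{2^{l-q}}$: it is chosen so that when the $l$-th single-step inequality is raised to the power $Q/2^l$ during the iteration, every diagonal term collapses to exactly $M^{2Q}/H$ (because $H_l^{Q/2^l}=H$), allowing all $q$ of them to be absorbed into a single error term; verifying this, together with the accounting of the powers of $M$, which accumulate to $\sum_{j=0}^{q-1}Q/2^j=2(Q-1)$ in the fully-differenced term, is the crux of getting the statement exactly right. The rest is the standard Weyl differencing machinery, as in M\"uller~\cite{mullerII}.
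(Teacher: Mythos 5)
Your proposal is correct: the induction on $q$ via the single-step weighted Weyl--Van der Corput inequality, with the calibration $H_l=H^{2^{l-q}}$ collapsing every diagonal term to $M^{2Q}/H$ and the composition of the weight and phase transformations yielding $G_q$ and $F_q$, is exactly the standard argument. The paper does not prove this lemma itself but defers to M\"uller's Lemma 1 in \cite{mullerII}, whose proof proceeds along precisely these lines, so your write-up supplies the omitted proof in essentially the intended way.
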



For a set $E\subset \mathbb{R}^2$ and a positive number $r$, we
define $E_{(r)}$ to be the larger set
\begin{equation*}
E_{(r)}=\{x\in \mathbb{R}^2: \dist(E, x)<r \}.
\end{equation*}

By using the Weyl--Van der Corput inequality (A-process) and the
Poisson summation formula followed by the method of stationary phase
(B-process), we get the following estimate of $S(T, M; G, F)$.


\begin{proposition}\label{claim1} Let $q\in \mathbb{N}$, $Q=2^q$, and $K>1$ be a parameter. Assume that
\begin{equation}
\dist\left(\supp(G), \mathcal{D}^{c}\right)\gtrsim K^{-q-2}
\label{bdy-dist}
\end{equation}
and that for all $\nu\in \mathbb{N}_0^2$ and $y\in \mathcal{D}$,
\begin{equation}
D^{\nu}G(y)\lesssim K^{(q+2)|\nu|}, \label{aaa}
\end{equation}
\begin{equation}
D^{\nu}F(y)\lesssim \left\{ \begin{array}{ll}
1 & \textrm{if $0\leq |\nu|\leq 1$}\\
K^{|\nu|-3} & \textrm{if $|\nu|\geq 2$}
\end{array}\right.,  \label{upperforf}
\end{equation}
and
\begin{equation}
\left|\det\left( \nabla^2 D^{\alpha}F(y) \right)\right|\gtrsim
K^{-2}, \quad \alpha=(1, q-1). \label{lowerforf}
\end{equation}

If
\begin{equation}
M\geq K^{8q+2} \label{res-M}
\end{equation}
and
\begin{equation}
T\geq K^{4-(4q+2)/Q}M^{q-1+2/Q}, \label{restriction}
\end{equation}
then
\begin{equation}
S(T, M; G, F)\lesssim \left(K^{6q-1}TM^{6Q-q-6}\right)^{1/(3Q-2)}+R,
\label{kkk}
\end{equation}
where
\begin{equation*}
R=K^{(13q+3)/Q}M^{2-2/Q}\left(K^{1-6q}T^{-1}M^{q+2}
\right)^{1/(3Q-2)+\epsilon}
\end{equation*}
for any $\epsilon>0$.

The constant implicit in \eqref{kkk} depends only on $q$, $c_0$,
$\epsilon$, and constants implicit in \eqref{bdy-dist}, \eqref{aaa},
\eqref{upperforf}, and \eqref{lowerforf}.
\end{proposition}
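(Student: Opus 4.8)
The plan is to prove Proposition \ref{claim1} by carrying out $q$ rounds of the Weyl--Van der Corput inequality (the A-process) followed by one application of Poisson summation together with the method of stationary phase (the B-process), treating $K$ as an auxiliary parameter throughout. First I would apply Lemma \ref{weyl-van-inequality} with a suitable choice of the vectors $r_1,\dots,r_q\in\mathbb Z^2_*$ and a parameter $H$ to be optimized at the end; taking absolute values and using $|G_q|\lesssim 1$ one is reduced to estimating the inner sums $S(\mathscr HTM^{-q},M;G_q,F_q)$. The conditions \eqref{bdy-dist}--\eqref{lowerforf} are precisely what is needed to control the derivatives of the new amplitude $G_q$ and phase $F_q$ uniformly in the shift parameters $h_1,\dots,h_q$: the support condition \eqref{bdy-dist} guarantees that the convex domain $\mathcal D_q$ on which $F_q$ is defined still contains $\supp(G_q)$ with room to spare (since each shift moves points by $\lesssim H_l/M$ and $H\lesssim M$), while \eqref{aaa} and \eqref{upperforf} give $D^\nu G_q\lesssim K^{(q+2)|\nu|}$ (up to the number of factors, a constant depending on $q$) and $D^\nu F_q\lesssim K^{|\nu|-3}$ for $|\nu|\ge 2$ — the point being that $F_q$ is an average of $\langle r_1,\nabla\rangle\cdots\langle r_q,\nabla\rangle F$, so it loses $q$ orders of differentiation but the $|r_i|\lesssim 1$ keep the constants under control.

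Next, for the B-process, I would apply the two-dimensional Poisson summation formula to $S(\mathscr HTM^{-q},M;G_q,F_q)$, writing it as $M^2\sum_{n\in\mathbb Z^2}\widehat{G_q e(\mathscr HTM^{-q}F_q)}(Mn)$ after rescaling. The $n=0$ term and the terms with $|n|$ large (where repeated non-stationary integration by parts, using the derivative bounds on $G_q$ and $F_q$, makes the contribution negligible — here is where \eqref{res-M} and \eqref{restriction} enter, ensuring the effective large-parameter $\mathscr HTM^{-q}$ dominates the amplitude's $K$-growth) are disposed of first. For the remaining range of $n$ one invokes the method of stationary phase in two variables: the nondegeneracy hypothesis \eqref{lowerforf}, namely $|\det(\nabla^2 D^\alpha F(y))|\gtrsim K^{-2}$ with $\alpha=(1,q-1)$, is exactly the statement that the Hessian of the phase $F_q$ (which, after the iterated directional derivatives, behaves like $D^\alpha F$) is nondegenerate with a quantitatively controlled inverse of size $\lesssim K^2$. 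Each stationary point then contributes $\asymp (\mathscr HTM^{-q})^{-1}\cdot|\det\nabla^2F_q|^{-1/2}\lesssim K (\mathscr HTM^{-q})^{-1}$ times the amplitude size, and summing over the $h_i$ and over the relevant $n$ gives a bound of the shape $M^{2(Q-1)}(\cdots)$ that, after taking the $Q$-th root, produces the two terms in \eqref{kkk}: the main term $(K^{6q-1}TM^{6Q-q-6})^{1/(3Q-2)}$ after optimizing $H$, and the error term $R$ collecting the $\epsilon$-loss from the divisor-type sum over $n$ and the $h_i$.

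The main obstacle, I expect, is bookkeeping the powers of $K$ through both processes so that everything closes consistently. Each A-step multiplies the effective differentiation order on $F$ by one and each direction $r_i$ contributes through \eqref{upperforf}, so after $q$ steps the "curvature" of $F_q$ is governed by $K^{-1}$-type quantities in the Hessian rather than a fixed constant; one must check that the stationary-phase expansion still has its error term dominated by its main term under precisely the hypotheses \eqref{res-M}--\eqref{restriction}, and that the accumulated constants depend only on $q$. A secondary technical point is verifying that the set $\mathcal D_q$ is genuinely convex and large enough for all admissible $h_i$ (so that $F_q$ is well defined and its derivative bounds are uniform), and that the stationary point of $F_q$ stays inside $\supp(G_q)$ — this is where the factor $K^{-q-2}$ in \eqref{bdy-dist}, rather than a cruder $K^{-q}$, is needed to absorb the shifts of size $\lesssim H_l/M \le 1/M$ together with the $K^{q+2}$-growth of the derivatives of $G$. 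Once these uniformities are in place, the optimization over $H$ is a routine computation balancing $M^{2Q}/H$ against the $H$-dependence of the main term, yielding the stated exponents. After replacing the vectors $r_i$ by a family whose linear spans cover all directions and taking the best bound (as in M\"uller), one obtains \eqref{kkk} in the stated form.
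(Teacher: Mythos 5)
Your plan follows the paper's proof essentially step for step: $q$ applications of the Weyl--Van der Corput inequality (Lemma \ref{weyl-van-inequality}), then Poisson summation and stationary phase applied to the inner sums $S(\mathscr{H}TM^{-q},M;G_q,F_q)$, then optimization of $H$ to balance the resulting terms; the roles you assign to the hypotheses \eqref{bdy-dist}--\eqref{lowerforf} are exactly the ones they play in the paper. Two points deserve correction, however. First, your closing sentence --- replacing the $r_i$ by a family of directions whose spans cover everything and taking the best bound --- is not only unnecessary but would fail as stated: the nondegeneracy hypothesis \eqref{lowerforf} is assumed only for the single multi-index $\alpha=(1,q-1)$, so the directions are forced, and the paper fixes $r_1=e_1$ and $r_2=\cdots=r_q=e_2$ once and for all precisely so that $\nabla^2 F_q$ is governed (up to an $O(KH/M)$ error, which is why $H\le c_3K^{-2q-1}M$ is imposed) by $\nabla^2 D^{(1,q-1)}F$. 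Second, the stationary-phase step is not a single-critical-point computation: since $\det\nabla^2 F_q$ is only bounded below by a negative power of $K$ while the support of the amplitude has diameter $O(1)$, the phase can have many critical points for each fixed dual frequency $p$ --- after the paper's normalizing substitution $z=Kx$ there are up to $O(K^{4q+2})$ of them, counted via a quantitative inverse function theorem (Lemma \ref{app:lemma:1}). This multiplicity, together with the count $O\bigl((K^{q-2}\mathscr{H}TM^{-q-1})^2+1\bigr)$ of relevant frequencies $p$, enters the exponent of $K$ in the main term of \eqref{kkk}, so it cannot be absorbed into generic ``bookkeeping'': your per-point contribution of size $K(\mathscr{H}TM^{-q})^{-1}$ must be multiplied by both counts before summing over the $h_i$. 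With these two corrections your outline reproduces the paper's argument.
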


\begin{remark}
One can write the bound \eqref{kkk} into a nicer form by removing
the error term $R$ at the cost of assuming (instead of
\eqref{restriction}) a more restricted condition on $T$. However we
keep the current form since it is good enough for later
applications.
\end{remark}

\begin{proof}[Proof of Proposition \ref{claim1}]

Let
\begin{equation}
1<H\leq c_3 K^{-2q-1}M    \label{con-on-H}
\end{equation}
with $c_3<1$ chosen (later) to be sufficiently small. Then $H\leq
M$.  We apply Lemma \ref{weyl-van-inequality} with $r_1=e_1$ and
$r_j=e_2$ ($j=2, \ldots, q$). Applying the Poisson summation formula
followed by a change of variables yields
\begin{align*}
S_1 &:=S(\mathscr{H}TM^{-q}, M; G_q, F_q) \\
&\ =\sum_{p\in \mathbb{Z}^2} K^{-2}M^2\int_{\mathbb{R}^2}
\Psi_q(z)e\left(\mathscr{H}TM^{-q}F_q(K^{-1} z)-K^{-1}M \langle p,
z\rangle\right)\,\textrm{d}z,
\end{align*}
where $\Psi_q(z)=G_q(K^{-1}z)$. It is obvious that
\begin{equation}
\supp(\Psi_q)\subset K\mathcal{D}_q \subset c_0 KB(0, 1).
\label{domain1}
\end{equation}
By \eqref{bdy-dist} we also have
\begin{equation}
\dist(\supp(\Psi_q), (K\mathcal{D}_q)^{c})\gtrsim K^{-q-1}.
\label{domain2}
\end{equation}

By the assumption \eqref{upperforf} there exists a constant $A_1$
such that
\begin{equation*}
|\nabla_z(F_q(K^{-1}z))|\leq (A_1/2) K^{q-3}.
\end{equation*}
We split $S_1$ into two parts
\begin{equation*}
S_1=\sum_{|p|<A_1K^{q-2}\mathscr{H}TM^{-q-1}}+\sum_{|p|\geq
A_1K^{q-2}\mathscr{H}TM^{-q-1}}=\textrm{:}S_2+R_1.
\end{equation*}

It is not hard to prove, by integration by parts (see for example
Hormander~\cite{hormander} Theorem 7.7.1), that
\begin{equation}
R_1\lesssim K^{3q+6}M^{-1}.\label{R5}
\end{equation}

Next we will estimate $S_2$. Define
$\lambda_1=K^{q-3}\mathscr{H}TM^{-q}$ and
\begin{equation*}
\Phi_q(z, p)=(\mathscr{H}TM^{-q}F_q(K^{-1} z)-K^{-1}M \langle p,
z\rangle)/\lambda_1.
\end{equation*}
Then
\begin{equation}
S_2=K^{-2}M^2 \sum_{|p|<A_1 K\lambda_1M^{-1}} \int
\Psi_q(z)e(\lambda_1 \Phi_q(z, p))\,\textrm{d}z. \label{S5}
\end{equation}

To estimate $S_2$ we discuss in two cases.

CASE 1. $\lambda_1\geq K^{8q+1}$.

For all $z\in K\mathcal{D}_q$, by \eqref{aaa}, \eqref{upperforf},
and \eqref{lowerforf}, we get
\begin{equation}
D^{\nu}_z\Psi_q(z)\lesssim K^{(q+1)|\nu|}, \label{ff}
\end{equation}
\begin{equation}
D^{\nu}_z \Phi_q(z, p)\lesssim 1,\quad  \textrm{if $|\nu|\geq 1$},
\label{gg}
\end{equation}
and
\begin{equation}
|\det\left(\nabla^2_{zz}\Phi_q(z, p) \right)|\gtrsim K^{-2q}.
\label{cc}
\end{equation}
To prove this lower bound \eqref{cc} we first note, by using the
definition of $F_q$ and the mean value theorem, that
\begin{equation*}
\frac{\partial^2}{\partial z_{l_1} \partial z_{l_2}} (\Phi_q(z,
p))=K^{1-q}\frac{\partial^{2} D^{\alpha}F}{\partial x_{l_1}
\partial x_{l_2}}(K^{-1}z)+O(K\frac{H}{M}).
\end{equation*}
The two terms on the right are $\lesssim$ $1$ and $c_3 K^{-2q}$
respectively (due to \eqref{upperforf} and \eqref{con-on-H}). Thus
\begin{equation*}
\det(\nabla^2_{zz}(\Phi_q(z,
p)))=K^{2-2q}\det(\nabla^2D^{\alpha}F(K^{-1}z))+O(c_3 K^{-2q}).
\end{equation*}
Then \eqref{cc} follows from \eqref{lowerforf} if $c_3$ is
sufficiently small.

With \eqref{domain1}, \eqref{domain2}, \eqref{ff}, \eqref{gg}, and
\eqref{cc}, we can estimate the integrals in $S_2$. Let us fix an
arbitrary $p\in \mathbb{Z}^2$ with $|p|<A_1 K\lambda_1M^{-1}$.

We first estimate the number of critical points of the phase
function $\Phi_q$. Denote $\widetilde{p}=K^{-1}Mp/\lambda_1$ and
$\mathfrak{F}(z)=K^{3-q}\nabla_z(F_q(K^{-1}z))$, then $\nabla_z
\Phi_q(z, p)=\mathfrak{F}(z)-\widetilde{p}$ and the critical points
are determined by the equation
\begin{equation*}
\mathfrak{F}(z)=\widetilde{p} \quad \textrm{for $z\in
K\mathcal{D}_q$}.
\end{equation*}
The bounds \eqref{gg} and \eqref{cc} imply that the mapping
$\mathfrak{F}=(\mathfrak{F}_1, \mathfrak{F}_2)$ satisfies
\begin{equation*}
D^{\nu}\mathfrak{F}_j(z)\lesssim 1   \quad \textrm{for $|\nu|\leq
2$, $j=1$, $2$},
\end{equation*}
and
\begin{equation*}
|\det(\nabla_z \mathfrak{F}(z))|\gtrsim K^{-2q}.
\end{equation*}
By \eqref{domain2}, we know that $\supp(\Psi_q)$ is strictly smaller
than $K\mathcal{D}_q$ and the distance between their boundary is
larger than $a_1 K^{-q-1}$ for some positive constant $a_1$. Let
$r_0=a_1 K^{-q-1}/2$. By Taylor's formula, there exists a positive
constant $a_2$ ($<a_1/2$) such that if $\tilde{z}$ is a critical
point in $(\supp(\Psi_q))_{(r_0)}$ then
\begin{equation}
|\nabla_z \Phi_q(z, p)|\gtrsim K^{-2q}|z-\tilde{z}|, \quad
\textrm{for any $z\in B(\tilde{z}, a_2 K^{-2q})$}. \label{taylor}
\end{equation}
Applying Lemma \ref{app:lemma:1} to $\mathfrak{F}$ with $r_0$ as
above yields two positive constants $a_3$ ($<a_2/2$) and $a_4$ such
that if $r_1=a_3 K^{-2q}$, $r_2=a_4 K^{-4q}$, then $\mathfrak{F}$ is
bijective from $B(z, 2r_1)$ to an open set containing
$B(\mathfrak{F}(z), 2r_2)$ for any $z\in (\supp(\Psi_q))_{(r_0)}$.
It follows, simply by a size estimate, that the number of critical
points in $(\supp(\Psi_q))_{(r_1)}$ is $\lesssim$
$K^2/r_1^{2}\lesssim K^{4q+2}$ .

For the $p$ that we have fixed, let $Z_j$ ($j=1, \ldots, J(p)$) be
all critical points in $(\supp(\Psi_q))_{(r_1)}$ of the phase
function $\Phi_q(z, p)$ and $\chi_j(z)=\chi_0((z-Z_j)/(c_4 r_1))$
with $c_4$ chosen below, where $\chi_0$ is a fixed smooth cut-off
function whose value is $1$ on $B(0, 1)$ and $0$ on the complement
of $B(0, 2)$.  Then the integral in $S_2$ can be decomposed as
\begin{equation}
\int \Psi_q(z)e(\lambda_1 \Phi_q(z, p))\,\textrm{d}z
=S_3+R_2,\label{S5-1}
\end{equation}
where
\begin{equation*}
S_3:=\sum_{j=1}^{J(p)}\int \chi_j(z) \Psi_q(z)e(\lambda_1 \Phi_q(z,
p))\,\textrm{d}z
\end{equation*}
and
\begin{equation*}
R_2:=\int \left(1-\sum_{j=1}^{J(p)}\chi_j(z)\right)
\Psi_q(z)e(\lambda_1 \Phi_q(z, p))\,\textrm{d}z.
\end{equation*}

It follows again from integration by parts (Theorem 7.7.1 in
\cite{hormander}) and also \eqref{taylor} that
\begin{equation}
R_2\lesssim K^{8qN+2}\lambda_1^{-N}, \quad N\in \mathbb{N}.
\label{boundR6}
\end{equation}

As for $S_3$, by Sogge and Stein's \cite[Lemma 2]{soggestein} (with
$\delta=K^{-2q}$) if $c_4$ is sufficiently small then for each $j$
we have
\begin{equation*}
\left| \int \chi_j(z) \Psi_q(z)e(\lambda_1 \Phi_q(z,
p))\,\textrm{d}z \right|\lesssim \lambda_1^{-1} K^{q}.
\end{equation*}
Hence
\begin{equation}
S_3\lesssim K^{5q+2}\lambda_1^{-1}. \label{boundS3}
\end{equation}

Noticing that we have assumed that $\lambda_1\geq K^{8q+1}$ in the
Case 1, it is easy to check that the bound \eqref{boundR6} of $R_2$
is less than the \eqref{boundS3} of $S_3$ if $N$ is sufficiently
large. Hence by using \eqref{S5}, \eqref{S5-1}, \eqref{boundR6}, and
\eqref{boundS3}, we get
\begin{align}
S_2&\lesssim K^{-2}M^2\left(\left(K\lambda_1M^{-1}\right)^2+1\right)K^{5q+2}\lambda_1^{-1} \nonumber  \\
   &\lesssim K^{5q+2}\lambda_1+K^{5q}M^2\lambda_1^{-1}.\label{case1}
\end{align}

CASE 2.  $\lambda_1< K^{8q+1}$.

Within this range of $\lambda_1$, the assumption \eqref{res-M}
implies $K\lambda_1M^{-1}<1$, hence the trivial estimate of $S_2$
(together with \eqref{domain1} and \eqref{ff}) yields
\begin{equation}
S_2\lesssim M^2\leq K^{8q+1}M^2 \lambda_1^{-1}. \label{case2}
\end{equation}

Combining the bounds of $S_2$ from Case 1 and 2 (namely,
\eqref{case1} and \eqref{case2}) yields
\begin{equation*}
S_2\lesssim K^{5q+2}\lambda_1+K^{8q+1}M^2 \lambda_1^{-1}.
\end{equation*}

By  \eqref{res-M} and  \eqref{R5} we have $R_1\lesssim K^{4-5q}\leq
1$, which is smaller than the above bound no matter whether
$\lambda_1\geq 1$ or $<1$. Thus
\begin{equation*}
S_1=S_2+R_1\lesssim
K^{6q-1}\mathscr{H}TM^{-q}+K^{7q+4}(\mathscr{H}T)^{-1}M^{q+2},
\end{equation*}
where we have already used the definition of $\lambda_1$.

Plugging this bound of $S_1$ into the inequality in Lemma
\ref{weyl-van-inequality} gives
\begin{equation}
|S(T, M; G, F)|^{Q}\lesssim
M^{2Q}H^{-1}+K^{6q-1}TM^{2Q-q-2}H^{2-2/Q}+\textrm{E}, \label{cccc}
\end{equation}
where
\begin{equation*}
\textrm{E}=K^{7q+4}T^{-1}M^{2Q+q}H^{-2+2/Q}(\log H)^q.
\end{equation*}

In order to balance the first two terms on the right side of
\eqref{cccc} we let
\begin{equation*}
H=c_3(K^{1-6q}T^{-1}M^{q+2})^{Q/(3Q-2)}.
\end{equation*}
This choice of $H$ satisfies \eqref{con-on-H}. Indeed, the
assumption \eqref{restriction} implies $H\leq c_3 K^{-2q-1} M$. We
also have $1<H$ since we can assume $T<c_5 K^{1-6q}M^{q+2}$ with a
sufficiently small $c_5$ (otherwise the trivial bound of $S(T, M; G,
F)$, {\it i.e.} $M^2$, is better than \eqref{kkk}).

With this $H$ the bound \eqref{cccc} leads to the desired bound
\eqref{kkk}.
\end{proof}


\section{Nonvanishing Determinants}\label{nonvanishing-det}

For $\xi\in\mathfrak{C}_0$ let $H(\xi):=\langle \xi, x(\xi)\rangle$,
which is obviously positively homogeneous of degree one, {\it i.e.}
$H(\lambda \xi)=\lambda H(\xi)$ if $\lambda>0$.

In this section we provide some technical results related to $H$.
They are similar to those proved in Section 3 of \cite{guo2} and
\cite{guo3}. We omit the proof of Lemma \ref{upperforH} and provide
a sketch for Lemma \ref{non-vanishinglemma} for completeness. In
particular Lemma \ref{non-vanishinglemma} is a refinement to
M\"{u}ller's \cite[Lemma 3]{mullerII} in two dimensional case. It is
obtained based on M\"{u}ller's original proof. We refer interested
readers to these references for more details.

By using implicit differentiation and induction we can easily get
the following lemma.

\begin{lemma}\label{upperforH}
$H$ is smooth in $\mathfrak{C}_0$ and for every $\xi\in
\mathfrak{C}_0$
\begin{equation*}
H(\xi)\lesssim |\xi|,
\end{equation*}
\begin{equation*}
D^{\nu}H(\xi) \lesssim 1 \quad \textrm{for $|\nu|=1$},
\end{equation*}
and
\begin{equation*}
D^{\nu}H(\xi) \lesssim_{|\nu|} |\xi|^{1-|\nu |}K_{\xi}^{|\nu|-3}
\quad \textrm{for } |\nu|\geq 2.
\end{equation*}
Furthermore the matrix $\nabla^2_{\xi \xi}H(\xi)$ has two
eigenvalues $0$ and $(|\xi|K_{\xi})^{-1}$.
\end{lemma}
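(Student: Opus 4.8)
The plan is to read $H$ as the support function of the curve $\Gamma_g$: by positive homogeneity of degree one, everything reduces to the behaviour of a single one-variable ``radius of curvature'' function on the normal arc $S^1\cap\mathfrak{C}_0$. First, since the Gauss map $t\mapsto\vec{n}((t,g(t)))$ is smooth with nonvanishing derivative (the curvature of $\Gamma_g$ never vanishes), its inverse $\xi\mapsto x(\xi)$ is smooth on $\mathfrak{C}_0$ and depends only on $\xi/|\xi|$, hence is homogeneous of degree $0$; therefore $H(\xi)=\langle\xi,x(\xi)\rangle$ is smooth on $\mathfrak{C}_0$. Differentiating $H(\xi)=\xi_1 t(\xi)+\xi_2 g(t(\xi))$ and using that $\xi\parallel(-g'(t),1)$ kills the coefficient of $t'(\xi)$, one gets the classical identity $\nabla_\xi H(\xi)=x(\xi)$. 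Since $x(\xi)\in\Gamma_g$ is a bounded set, this gives at once $H(\xi)\lesssim|\xi|$ and $D^\nu H(\xi)\lesssim1$ for $|\nu|=1$.

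For second and higher derivatives I would pass to polar coordinates $\xi=r\hat r$, $r=|\xi|$, and write $H(\xi)=r\,h(\theta)$ with $h(\theta):=H(\hat r)$. A direct computation with the moving frame $\hat r,\hat\theta$ and $\nabla H=h\,\hat r+h'\,\hat\theta$ shows $\partial_r(\nabla H)=0$ and $\tfrac1r\partial_\theta(\nabla H)=\tfrac1r(h+h'')\,\hat\theta$, i.e.
\[
\nabla^2_{\xi\xi}H(\xi)=\frac{h(\theta)+h''(\theta)}{|\xi|}\;\hat\theta\otimes\hat\theta .
\]
Thus $\xi$ always lies in the kernel (eigenvalue $0$) and the second eigenvalue is $(h+h'')/|\xi|$ in the direction $\xi^{\perp}$. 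Parametrizing $\Gamma_g$ by the normal angle $\theta$ one has $x'(\theta)\parallel\hat\theta$ with $|x'(\theta)|=1/K_\xi$, which forces the standard relation $h+h''=\rho(\theta):=1/K_\xi$. This proves the last assertion of Lemma~\ref{upperforH}, and since $\rho\lesssim1$ it also gives $D^\nu H\lesssim|\xi|^{-1}K_\xi^{-1}$ for $|\nu|=2$, matching the claimed exponent $|\xi|^{1-|\nu|}K_\xi^{|\nu|-3}$.

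To get the bound for $|\nu|=n\ge3$ I would differentiate the displayed Hessian formula $n-2$ further times. Each extra Cartesian derivative either falls on the scalar $\rho(\theta)/r$, producing terms $\rho^{(j)}/r^{\,n-1}$ with $0\le j\le n-2$, or on the uniformly bounded frame factor $\hat\theta\otimes\hat\theta$ at the cost of one more power of $1/r$; bookkeeping the two possibilities gives, for $|\nu|=n\ge2$,
\[
D^\nu H(\xi)\;\lesssim\;|\xi|^{1-n}\max_{0\le j\le n-2}\bigl|\rho^{(j)}(\theta)\bigr| .
\]
Hence the whole lemma reduces to the one-dimensional estimate $|\rho^{(j)}(\theta)|\lesssim K_\theta^{\,j-1}$ for $j\ge0$ (where $K_\theta:=K_{\hat r(\theta)}$): since $K_\theta\gtrsim1$ this yields $\max_{0\le j\le n-2}|\rho^{(j)}|\lesssim K_\theta^{\,n-3}$, which closes the induction.

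It remains to prove this key estimate, and \emph{this is where the real content lies}; it is the two-dimensional analogue of M\"uller's \cite[Lemma 3]{mullerII}, handled as in \cite{guo2,guo3}. Away from the two cusp normals the arc is compact and $K_\theta\asymp1$, so $\rho$ is smooth with all derivatives $O(1)$ and there is nothing to do. Near a cusp $P_i$, Lemma~\ref{angle-curv} already gives $\rho(\theta)\asymp|\theta-\theta_i|$, where $\theta_i$ is the normal angle at $P_i$; to control higher derivatives I would use the explicit formulas $g'(t)=-\arccos(t)/\pi$ and $g''(t)=1/(\pi\sqrt{1-t^2})$ coming from \eqref{def-g}, so that $K_\xi=g''(t)/(1+g'(t)^2)^{3/2}$, and introduce the local coordinate $u=\sqrt{1\mp t}$ near $t=\pm1$. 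In this coordinate both $\rho$ and $\theta-\theta_i$ become real-analytic functions of $u$ vanishing exactly to first order, so $\rho$ is a real-analytic function of $(\theta-\theta_i)$ with a simple zero, giving $\rho^{(j)}(\theta)=O(1)$ uniformly near the cusp; combined with $K_\theta\gtrsim1$ this is the desired $|\rho^{(j)}|\lesssim K_\theta^{\,j-1}$. (Alternatively the slowly-varying-curvature estimates of Lemmas~\ref{angle-curv}--\ref{curv-in-ball} can replace the analyticity argument.) The main obstacle throughout is precisely this change of variables between the curve parameter and the normal angle near the cusps: it is singular, and one must verify that the singularity is mild enough that all the stated uniform bounds survive.
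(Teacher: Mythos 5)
Your argument is correct, and it takes a genuinely different route from the paper's: the authors in fact omit the proof, saying only that it follows ``by implicit differentiation and induction'' as in Section 3 of \cite{guo2, guo3}, i.e.\ by repeatedly differentiating the defining relation $-g'(t(\xi))=\xi_1/\xi_2$ and tracking, through the induction, the powers of $K_\xi$ coming from $g^{(j)}(t)\lesssim (1-t^2)^{(3-2j)/2}\asymp K_\xi^{2j-3}$. You instead package everything into the support-function identities $\nabla H=x(\xi)$ and $\nabla^2H=(h+h'')|\xi|^{-1}\hat\theta\otimes\hat\theta$, reducing the whole lemma to the one-variable bound on $\rho^{(j)}(\theta)$; this is cleaner, makes the eigenvalue assertion transparent, and your cusp analysis (both $\rho$ and $\theta-\theta_i$ analytic in $u=\sqrt{1\mp t}$ with simple zeros, hence $\rho$ analytic in $\theta-\theta_i$) actually yields the stronger uniform bound $\rho^{(j)}(\theta)=O(1)$, which implies the stated $K_\xi^{\,j-1}$ since $K_\xi\gtrsim1$. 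Two small caveats. First, the sign in $h+h''=\rho$ is not ``forced'' by $|x'(\theta)|=1/K_\xi$ alone: for this curve the exterior normal of $\Omega$ along $\Gamma_g$ is the \emph{interior} normal of the convex epigraph of $g$, and a direct check (on the slice $\xi_2=1$ one has the closed form $H(\xi_1,1)=\pi^{-1}\sin(\pi\xi_1)$, whence $\partial^2_{\xi_1\xi_1}H=-\pi\sin(\pi\xi_1)<0$) shows the nonzero eigenvalue is really $-(|\xi|K_\xi)^{-1}$; this sign discrepancy is inherited from the lemma's own statement and is harmless, since only the magnitudes of $D^\nu H$ and of the Hessian's eigenvalues are used downstream. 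Second, you should state explicitly that all constants are uniform over $S^1\cap\mathfrak{C}_0$, which your argument does deliver (compactness away from the cusps plus fixed analytic neighbourhoods of the two cusp normals), after which homogeneity handles general $\xi$.
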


Given vectors $v_1$, $v_2\in \mathbb{R}^2$, by writing $V=(v_1,
v_2)$ we mean $V$ is the matrix with column vectors $v_1, v_2$. For
$q\in \mathbb{N}$, real $u_1$ and $u_2$, and $y\in\mathfrak{C}_0$,
define
\begin{equation*}
F(u_1, u_2)=H(y+u_1 v_1+u_2 v_2)
\end{equation*}
and
\begin{equation*}
h_q(y, v_1, v_2)=\det\left(g_{i,j}(y, v_1, v_2)\right)_{1\leq i,
j\leq 2}
\end{equation*}
with
\begin{equation*}
g_{i,j}(y, v_1, v_2)=\frac{\partial^{q+2}F}{\partial u_1
\partial u_i
\partial u_j \partial u_2^{q-1}}(0) .
\end{equation*}

The following lemma provides nonvanishing determinants needed for
our  application of the method of stationary phase. This result can
be easily extended to non-unit vectors by using the homogeneity of
$H$.

\begin{lemma} \label{non-vanishinglemma}
For every $\xi\in S^1 \cap \mathfrak{C}_0$ there exist two
orthogonal vectors $v_1(\xi)$, $v_2(\xi)\in \mathbb{Z}^2$ such that
\begin{equation}
|v_1|=|v_2|\asymp_q K_{\xi}^{q+1}\quad \textrm{and}\quad \|(v_1,
v_2)^{-1}\|\lesssim_q K_{\xi}^{-q-1}, \label{lemma4:4-1}
\end{equation}
and a positive constant $c_2=c_2(q)$ such that for any $\eta \in
B(\xi, c_2K_\xi^{-q-2})$ we have
\begin{equation}
|h_q(\eta, v_1, v_2)|\gtrsim_q K_\xi^{2q^2+6q+2}. \label{bbb}
\end{equation}
\end{lemma}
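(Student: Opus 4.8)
The plan is to follow M\"uller's strategy from \cite{mullerII}, Lemma 3, but tracking the dependence on the curvature parameter $K_\xi$ carefully, since this is exactly the extra ingredient needed in our setting. The starting point is the structure of $\nabla^2 H$ provided by Lemma \ref{upperforH}: at each $\xi\in S^1\cap\mathfrak C_0$ the Hessian $\nabla^2_{\xi\xi}H(\xi)$ has eigenvalues $0$ and $(|\xi|K_\xi)^{-1}$. The zero eigenvalue corresponds to the radial direction $\xi$ itself (by homogeneity, $\nabla^2 H(\xi)\,\xi=0$), and the nonzero eigenvalue corresponds to the tangential direction $\xi^\perp$. So the natural ``adapted frame'' at $\xi$ consists of $\xi$ and $\xi^\perp$, but these are not integer vectors, so the first step is a Diophantine approximation: I would use Dirichlet's theorem (or a lattice argument in a ball of radius $\asymp K_\xi^{q+1}$) to produce two orthogonal vectors $v_1(\xi),v_2(\xi)\in\mathbb Z^2$ with $|v_1|=|v_2|\asymp_q K_\xi^{q+1}$ whose directions are within $O(K_\xi^{-q-2})$ of a fixed rotation of the $\xi$, $\xi^\perp$ frame; orthogonality of the pair in $\mathbb Z^2$ is arranged by taking $v_2$ to be $v_1$ rotated by $\pi/2$, which stays in $\mathbb Z^2$. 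The bound $\|(v_1,v_2)^{-1}\|\lesssim_q K_\xi^{-q-1}$ then follows from $|v_1|=|v_2|\asymp_q K_\xi^{q+1}$ and orthogonality, giving \eqref{lemma4:4-1}.

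Next I would compute $h_q(\eta,v_1,v_2)$ at $\eta=\xi$ in this frame. Writing $F(u_1,u_2)=H(\xi+u_1v_1+u_2v_2)$, each $g_{i,j}$ is a mixed partial of order $q+2$ of $H$ contracted against the $v$'s. The key point is a ``leading term'' analysis: because $v_1$ is essentially radial and $v_2$ essentially tangential, differentiating $H$ once in the $v_1$-direction and once in the $v_2$-direction, then $q-1$ more times in the $v_2$-direction, the dominant contribution comes from applying the homogeneity relations and the curvature structure. I would expand each derivative of $H$ using implicit differentiation of the Gauss-map relation defining $x(\xi)$ (as in Lemma \ref{upperforH}'s proof), isolate the term of largest size in $K_\xi$, and show that $g_{1,1}$, $g_{1,2}=g_{2,1}$, $g_{2,2}$ have sizes that, when assembled into the $2\times2$ determinant, do not cancel to leading order. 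Concretely, $|v_i|\asymp_q K_\xi^{q+1}$ contributes a factor $K_\xi^{(q+1)(q+2)}$ from the $q+2$ contractions with $v$'s, while the derivatives of $H$ of order $\ge 2$ in the nonzero-eigenvalue direction contribute the curvature weights from Lemma \ref{upperforH}, namely $|\xi|^{1-|\nu|}K_\xi^{|\nu|-3}$; with $|\xi|\asymp1$ and the appropriate orders the net homogeneity degree works out to $K_\xi^{2q^2+6q+2}$ in \eqref{bbb}. The error terms (coming from the $O(K_\xi^{-q-2})$ discrepancy between $v_i$ and the ideal frame, and from evaluating at $\eta$ rather than $\xi$) are lower order and absorbed by choosing $c_2=c_2(q)$ small.

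Finally, to pass from $\eta=\xi$ to all $\eta\in B(\xi,c_2K_\xi^{-q-2})$ I would use a Taylor/continuity argument: the function $\eta\mapsto h_q(\eta,v_1,v_2)$ is smooth, and by the derivative bounds on $H$ from Lemma \ref{upperforH} together with $|v_i|\asymp_q K_\xi^{q+1}$, its gradient in $\eta$ is controlled so that on a ball of radius $c_2K_\xi^{-q-2}$ the value cannot drop below half of $|h_q(\xi,v_1,v_2)|$, provided $c_2$ is small enough depending on $q$. This yields \eqref{bbb} uniformly on the ball; the extension to non-unit $\xi$ follows from homogeneity of $H$ as remarked. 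The main obstacle I anticipate is the leading-term computation in the second paragraph: one must verify that the $2\times2$ determinant $h_q(\xi,v_1,v_2)$ genuinely has size $\asymp K_\xi^{2q^2+6q+2}$ and is not smaller due to an unexpected algebraic cancellation among the mixed partials. This requires a careful bookkeeping of which direction each derivative acts in and exploiting that $v_1$ is (nearly) the radial/null direction of the Hessian while $v_2$ is (nearly) the direction realizing the eigenvalue $(|\xi|K_\xi)^{-1}$; the rank-one degeneracy of $\nabla^2 H$ must be handled by going to one higher order of Taylor expansion so that the relevant $(q+2)$-nd order tensor is nondegenerate, which is precisely where the specific index pattern $\alpha=(1,q-1)$ and the exponent $2q^2+6q+2$ come from.
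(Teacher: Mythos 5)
Your overall architecture matches the paper's: read off the adapted frame from the eigenstructure of $\nabla^2 H$ in Lemma \ref{upperforH}, evaluate $h_q$ at $\eta=\xi$ in that frame, replace the frame by nearby orthogonal integer vectors of length $\asymp K_\xi^{q+1}$ (one the $\pi/2$-rotation of the other, which gives orthogonality and the bound on $\|(v_1,v_2)^{-1}\|$ for free), rescale to pick up the factor $N^{2(q+2)}$, and pass to the ball $B(\xi,c_2K_\xi^{-q-2})$ by a mean-value/continuity argument. The rescaling and perturbation bookkeeping is essentially right, with one small inaccuracy: with $|v_i|\asymp K_\xi^{q+1}$ the directional approximation you can actually guarantee is $O(A^{-1}K_\xi^{-q-1})$, not $O(K_\xi^{-q-2})$; this weaker accuracy still suffices because the sensitivity of each $g_{i,j}$ to the direction of the $v$'s is only $O(K_\xi^{q-1})$.

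The genuine gap is precisely the step you flag as your main obstacle: the value of $h_q(\xi,\cdot,\cdot)$ in the adapted frame. You leave it as a ``leading term analysis'' threatened by ``unexpected algebraic cancellation,'' but there is an exact, cancellation-free computation, and it only comes out if the two vectors are assigned the opposite roles from the ones you propose. Since $g_{i,j}=\partial^{q+2}F/(\partial u_1\,\partial u_i\,\partial u_j\,\partial u_2^{q-1})$, the vector carrying the $\partial u_2^{q-1}$ block must be the \emph{radial} one: take $v_2^*=\xi$ and $v_1^*=(-\xi_2,\xi_1)$ tangential. Then $\langle v_1^*,\nabla\rangle H$ is homogeneous of degree $0$, hence constant along the ray through $\xi$, so $g_{2,2}(\xi,v_1^*,v_2^*)=0$ exactly; $\langle v_1^*,\nabla\rangle^2H$ is homogeneous of degree $-1$ and equals the nonzero Hessian eigenvalue $K_\xi^{-1}$ at $\xi$, whence $g_{1,2}=g_{2,1}=(-1)^q q!\,K_\xi^{-1}$ exactly; and $g_{1,1}\lesssim_q 1$. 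Therefore $h_q(\xi,v_1^*,v_2^*)=-q!^2K_\xi^{-2}$, with no cancellation to worry about. With your assignment ($v_1$ radial, $v_2$ tangential) the entries reduce to $q^2(q-1)\left((q-2)a_{q-1}a_{q+1}-(q-1)a_q^2\right)$ with $a_m=\langle \xi^\perp,\nabla\rangle^mH(\xi)$, which vanishes identically for $q=1$ and for $q\ge2$ has no a priori lower bound from Lemma \ref{upperforH}; the argument would not close. Once the frame is assigned correctly, the rest of your outline (rescaling by $N^{2(q+2)}$ with $N\asymp K_\xi^{q+1}$ to get $2(q+1)(q+2)-2=2q^2+6q+2$, and the two perturbations) goes through as in the paper.
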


\begin{proof}
Let us arbitrarily fix a point $\xi=(\xi_1, \xi_2)^{t}\in S^1 \cap
\mathfrak{C}_0$. Denote $v_1^*=(-\xi_2, \xi_1)^{t}$ and
$v_2^*=(\xi_1, \xi_2)^{t}$. Then
\begin{equation}
h_q(\xi, v_1^*, v_2^*)=-q!^2 K_{\xi}^{-2}. \label{ddd}
\end{equation}
This equality follows from Lemma \ref{upperforH} and the homogeneity
of $H$ which actually imply that $g_{2,2}(\xi, v_1^*, v_2^*)=0$,
\begin{equation*}
g_{1,2}(\xi, v_1^*, v_2^*)=g_{2,1}(\xi, v_1^*, v_2^*)=(-1)^q
q!K_{\xi}^{-1},
\end{equation*}
and
\begin{equation*}
g_{1,1}(\xi, v_1^*, v_2^*)\lesssim_q 1.
\end{equation*}
Check the proof of \cite[Lemma 3]{mullerII} and \cite[Lemma
3.4]{guo2} for details.

For any integer $N\geq 2\sqrt{2}$ and $1\leq l\leq 2$ there exist
$N_l\in\mathbb{Z}$ such that $|\xi_l-N_l/N|\leq 1/N$. Denote
$v_1=(-N_2, N_1)^{t}$, $v_2=(N_1, N_2)^{t}$, and
$\widetilde{v}_l=v_l/N$. Then $|v_l^*-\widetilde{v}_l|\leq
\sqrt{2}/N$ and $1/2 \leq|\widetilde{v}_l|\leq 3/2$.

Assume that $N$ is the smallest integer not less than $A
K_\xi^{q+1}$ and that $\eta\in B(\xi, c_2 K_\xi^{-q-2})$ with $A$
and $c_2(\leq c_1)$ both chosen below, where $c_1$ is the constant
appearing in Lemma \ref{curv-in-ball}. By the mean value theorem and
Lemma \ref{curv-in-ball} and \ref{upperforH} we get
\begin{equation*}
\left|g_{i,j}(\xi, v_1^*, v_2^*)-g_{i,j}(\eta, \widetilde{v}_1,
\widetilde{v}_2)\right|\lesssim_q K_\xi^{q-1}N^{-1}+c_2 K_\xi^{-2},
\end{equation*}
which leads to
\begin{equation}
|h_q(\xi, v_1^*, v_2^*)-h_q(\eta, \widetilde{v}_1,
\widetilde{v}_2)|\leq C(q) \left(K_\xi^{q-1}N^{-1}+c_2
K_\xi^{-2}\right)\label{eee}
\end{equation}
for some constant $C(q)$. Hence if $A$ is sufficiently large and
$c_2$ is sufficiently small then \eqref{ddd} and \eqref{eee} imply
that
\begin{equation*}
|h_q(\eta, \widetilde{v}_1, \widetilde{v}_2)|\geq q!^2 K_\xi^{-2}/2.
\end{equation*}

The desired bound \eqref{bbb} then follows from the equality
\begin{equation*}
|h_q(\eta, v_1, v_2)|=N^{2(q+2)}|h_q(\eta, \widetilde{v}_1,
\widetilde{v}_2)|.
\end{equation*}
And $v_1$ and $v_2$ are the desired vectors satisfying
\eqref{lemma4:4-1}.
\end{proof}



\section{The Associated Lattice Point Problem}\label{lattice-point}

In this section we combine the tools from previous sections to prove
Theorem \ref{lattice-thm}.

As a preliminary step we transform the problem of counting lattice
points into an analytical one by a standard procedure: we consider
instead smoothed versions of $N_{\Omega}(\mu)$ (with errors under
control) to which we apply the Poisson summation formula; in each
resulting sum we separate the first term with the rest and
then use the Green's formula to get certain integrals along
boundaries. See for example Colin de Verdi\`{e}re~\cite{colin}
Section 2 for this procedure. Notice that Colin de Verdi\`{e}re's
enlarged domains $G_{\mu, \varepsilon}^{\pm}$ are formed by a
combination of dilation and translation, which are ingeniously
suitable for the comparison between $N_{\Omega}(\mu)$ and its
smoothed analogues.

For the completeness we carry out the above procedure in the proof
of the following lemma. A small difference is, unlike the treatment
of Colin de Verdi\`{e}re, that ours considers the lattice counting
function $N_{\Omega}(\mu)$ directly without adding a weight.

\begin{lemma}\label{1-reduction}
Let $\rho\in C_0^{\infty}(\mathbb{R}^2)$ be a nonnegative, radial,
and real-valued function such that $\int\rho=1$ and $\supp
\rho\subset B(0,1)$. Then
\begin{equation}
\begin{split}
&\left|N_{\Omega}(\mu)-|\Omega|\mu^2+\mu/2\right|\lesssim
\varepsilon^{-1}+\mu \varepsilon+\\
&\qquad \qquad \qquad \mu\left|\sum_{k\in \mathfrak{C}_0}
|k|^{-1}I(2\pi \mu |k|, k/|k|)\widehat{\rho}(\varepsilon k)e^{-2\pi
i (1/4\pm 2\varepsilon)k_2} \right|,
\end{split}\label{nnn}
\end{equation}
where the cone $\mathfrak{C}_0$ and the integral $I$ are defined by
\eqref{key-cone} and \eqref{key-integral} respectively and the
implicit constant only depends on the domain $\Omega$.
\end{lemma}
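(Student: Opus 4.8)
\textbf{Proof proposal for Lemma \ref{1-reduction}.}

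The plan is to follow the classical smoothing-and-Poisson-summation scheme, with Colin de Verdi\`ere's combined dilation-translation comparison domains in place of the naive dilates. First I would introduce the mollified counting functions: set $\rho_\varepsilon(x)=\varepsilon^{-2}\rho(x/\varepsilon)$ and consider $N^\pm(\mu):=\sum_{k\in\mathbb Z^2_{a,b}}\mathbf 1_{G^\pm_{\mu,\varepsilon}}*\rho_\varepsilon(k)$, where $G^\pm_{\mu,\varepsilon}$ are the enlarged/shrunk domains of \cite{colin} formed by dilating $\Omega$ by a factor $\mu(1\pm c\varepsilon)$ and translating so that the convolution by $\rho_\varepsilon$ is trapped between the two. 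Since $\partial\Omega$ has finite length and $\Omega$ has only the two cusps $P_1,P_2$ (whose contribution to the area of an $\varepsilon$-neighbourhood is still $O(\varepsilon)$ after rescaling by $\mu$), the sandwiching gives $|N_\Omega(\mu)-N^\pm(\mu)|\lesssim \mu\varepsilon$, and the area terms differ by $O(|\Omega|((\mu(1\pm c\varepsilon))^2-\mu^2))=O(\mu^2\varepsilon)$ — wait, that is too big, so here one must be careful: the point of Colin de Verdi\`ere's construction is that the dilation factor is $1+O(\varepsilon/\mu)$, i.e. the translation does most of the work, so the area discrepancy is only $O(\mu\varepsilon)$. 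I would invoke that construction verbatim.

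Next I would apply the Poisson summation formula to each $N^\pm(\mu)=\sum_{k\in\mathbb Z^2}\mathbf 1_{G^\pm_{\mu,\varepsilon}}*\rho_\varepsilon(k+(a,b))$, obtaining $\sum_{k\in\mathbb Z^2}\widehat{\mathbf 1_{G^\pm_{\mu,\varepsilon}}}(k)\,\widehat{\rho_\varepsilon}(k)\,e(-\langle k,(a,b)\rangle)$. The $k=0$ term contributes the area $|G^\pm_{\mu,\varepsilon}|=|\Omega|\mu^2+(\text{boundary correction})$; the linear-in-$\mu$ piece $-\mu/2$ comes from the first-order expansion of the area of $G^\pm_{\mu,\varepsilon}$ in $\varepsilon$ together with the specific translation $(0,-1/4)$, exactly as in \cite{colin} (this is where the $\mp\,\mathrm{Length}/(4\pi)$-type term of the Weyl law originates on the lattice side). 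For $k\neq0$, I would use Green's (divergence) theorem to write $\widehat{\mathbf 1_{G^\pm_{\mu,\varepsilon}}}(k)=\frac{1}{2\pi i|k|^2}\int_{\partial G^\pm_{\mu,\varepsilon}}e(-\langle x,k\rangle)\langle k,\nu(x)\rangle\,ds$, rescale the boundary integral back to $\partial\Omega$, and split $\partial\Omega$ into the three pieces: the graph arc $\Gamma_g=\{(t,g(t))\}$, the segment on $\{s=0\}$, and the segment on $\{s=-t\}$. On $\Gamma_g$ the exterior normal lies in the cone $\mathfrak C_0$, so only $k\in\mathfrak C_0$ (and its reflection) contribute the oscillatory integral $I(2\pi\mu|k|,k/|k|)$ after the change of variables $x=(t,g(t))$, which reproduces \eqref{key-integral} up to the factor $|k|^{-1}$ and the phase $e(-2\pi i(1/4\pm2\varepsilon)k_2)$ coming from the translation in the second coordinate. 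The two straight segments give geometric-type sums in one variable that are $O(\varepsilon^{-1}+\mu\varepsilon)$ (or are absorbed into the $k=0$/linear terms), since along a rational-slope segment the Fourier coefficients either vanish or sum to something controlled by the smoothing cutoff $\widehat\rho(\varepsilon k)$.

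Assembling: $|N_\Omega(\mu)-|\Omega|\mu^2+\mu/2|$ is bounded by the comparison errors $O(\mu\varepsilon)$, the segment contributions $O(\varepsilon^{-1}+\mu\varepsilon)$, and the $\Gamma_g$-sum, which after pulling out $\mu$ from the Jacobian of the dilation is $\mu\bigl|\sum_{k\in\mathfrak C_0}|k|^{-1}I(2\pi\mu|k|,k/|k|)\widehat\rho(\varepsilon k)e^{-2\pi i(1/4\pm2\varepsilon)k_2}\bigr|$, which is \eqref{nnn}. I expect the main obstacle to be bookkeeping rather than conceptual: one must verify that the two cusps do not spoil the $O(\mu\varepsilon)$ sandwich bound (the curvature blows up there, but the arc length and enclosed $\varepsilon$-collar area stay finite, so this is fine), and one must track precisely how the translation $(a,b)=(0,-1/4)$ and the $\pm2\varepsilon$ shift in Colin de Verdi\`ere's $G^\pm_{\mu,\varepsilon}$ feed into the phase $e^{-2\pi i(1/4\pm2\varepsilon)k_2}$ and into the constant $-\mu/2$. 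These are exactly the computations carried out in \cite{colin} Section 2, so I would cite them and only indicate the modifications needed because we work with $N_\Omega(\mu)$ unweighted and keep the mollifier $\rho$ general.
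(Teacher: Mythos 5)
Your overall scheme (mollify, sandwich between Colin de Verdi\`ere-type comparison domains, Poisson summation, Green's formula on the boundary, restriction to the cone $\mathfrak C_0$) is the same as the paper's, but there is one genuine error and one glossed step that would derail the computation if you carried it out as written. The error is the source of the $-\mu/2$. You attribute it to ``the first-order expansion of the area of $G^\pm_{\mu,\varepsilon}$ in $\varepsilon$ together with the translation $(0,-1/4)$.'' That cannot work: the comparison domains are obtained by translating the upper arc $x_2=\mu g(x_1/\mu)$ vertically by $\pm2\varepsilon$ (not by dilating by $1\pm c\varepsilon$), so $|G^\pm_{\mu,\varepsilon}|=|\Omega|\mu^2+O(\mu\varepsilon)$ and the $k=0$ term contains no linear-in-$\mu$ piece. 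In the paper the $-\mu/2$ comes precisely from the two \emph{long straight segments} of $\partial G^\pm_{\mu,\varepsilon}$ (on $x_2=0$, $0\le x_1\le\mu$, and on $x_2=-x_1$, $-\mu\le x_1\le 0$): after Green's formula the corresponding one-dimensional lattice sums over $k\ne 0$, evaluated against the translation $(0,-1/4)$, each equal $-\mu/4+O(\varepsilon^{-1})$. Your proposal dismisses exactly these contributions as ``$O(\varepsilon^{-1}+\mu\varepsilon)$ (or absorbed into the $k=0$/linear terms),'' so as written you would be left with an unexplained $-\mu/2$ discrepancy.

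The glossed step is the reduction of the frequency sum to $k\in\mathfrak C_0$. Every nonzero $k$ produces the boundary integral $I(2\pi\mu|k|,k/|k|)$ from the curved arc; one must \emph{prove} that the total contribution of $k\notin\mathfrak C_0$ is $O(\mu^{1/3}+\varepsilon^{-1})$. The delicate cases are the boundary directions $k=(0,k_2)$ and $k=(k_1,k_1)$, which are the limiting normals at the two cusps where the curvature blows up; there a plain integration by parts fails at the endpoint, and the paper splits the $t$-integral at distance $\delta=(\mu|k_2|)^{-2/3}$ from the endpoint to get $I\lesssim(\mu|k_2|)^{-2/3}$ and hence the $O(\mu^{1/3})$ total. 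Your statement that ``only $k\in\mathfrak C_0$ (and its reflection) contribute'' skips this entirely. Both points are fixable within your framework, but they are exactly the two places where the specific geometry of $\Omega$ (rational slopes of the straight edges, cusps with unbounded curvature) enters, so they cannot be waved through by citing the generic convex-domain argument.
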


\begin{proof}
Let $\chi_{G_{\mu, \varepsilon}^{+}}$ be the characteristic function
of the domain
\begin{equation*}
\{(x_1, x_2)\in \mathbb{R}^2 : |x_1|\leq \mu, \max(0,-x_1)\leq
x_2\leq \mu g(x_1/\mu)+2\varepsilon\}
\end{equation*}
and $\chi_{G_{\mu, \varepsilon}^{-}}$ be the difference of two
characteristic functions, namely
\begin{align*}
\chi_{G_{\mu, \varepsilon}^{-}}=&\textbf{1}_{\{(x_1, x_2)\in
\mathbb{R}^2 : |x_1|\leq \mu, \max(0,-x_1)\leq x_2\leq \mu
g(x_1/\mu)-2\varepsilon\}}\\
&-\textbf{1}_{\{(x_1, x_2)\in \mathbb{R}^2 : |x_1|\leq \mu, \mu
g(x_1/\mu)-2\varepsilon\leq x_2\leq \max(0,-x_1) \}}.
\end{align*}

It is then geometrically evident that
\begin{equation*}
\sum_{k\in \mathcal{R}} \chi_{G_{\mu,
\varepsilon}^{-}}*\rho_{\varepsilon}(k)\leq N_{\Omega}(\mu)\leq
\sum_{k\in \mathcal{R}}\chi_{G_{\mu,
\varepsilon}^{+}}*\rho_{\varepsilon}(k),
\end{equation*}
where
$\rho_{\varepsilon}(x)=\varepsilon^{-2}\rho(\varepsilon^{-1}x)$.
Applying the Poisson summation formula yields
\begin{equation}
R_{\varepsilon}^{-}(\mu)-C\mu\varepsilon\leq
N_{\Omega}(\mu)-|\Omega|\mu^2 \leq
R_{\varepsilon}^{+}(\mu)+C\mu\varepsilon,\label{hhh}
\end{equation}
where $C$ is a constant ({\it i.e.} the length of the curve
$\wideparen{P_1 P_2}$) and       
\begin{equation*}
R_{\varepsilon}^{\pm}(\mu):=\sum_{k\in \mathbb{Z}^2_*}
\widehat{\chi}_{G_{\mu,
\varepsilon}^{\pm}}(k)\widehat{\rho}(\varepsilon k)e^{-\pi i k_2/2}.
\end{equation*}

We study $R_{\varepsilon}^{+}(\mu)$ below while
$R_{\varepsilon}^{-}(\mu)$ can be handled similarly. By Green's
formula we have
\begin{equation*}
\widehat{\chi}_{G_{\mu, \varepsilon}^{+}}(k)=(2\pi i|k|)^{-1}
\oint_{\partial G_{\mu, \varepsilon}^{+}}e^{-2\pi i \langle k,
x\rangle}\left(\frac{k_2}{|k|}
\,\textrm{d}x_1-\frac{k_1}{|k|}\,\textrm{d}x_2 \right).
\end{equation*}
Notice that $\partial G_{\mu, \varepsilon}^{+}$ can be decomposed
into five parts: four line segments (including two short vertical
and two long ones) and one curly curve $\mu\wideparen{P_1 P_2}+(0,
2\varepsilon)$. Accordingly we can split the sum
$R_{\varepsilon}^{+}(\mu)$ into five.

Integration by parts shows that the two sums associated with two
vertical line segments are both of size $O(\varepsilon^{-1})$. The
other two associated with long line segments can be evaluated more
accurately--each one is equal to $-\mu/4+O(\varepsilon^{-1})$.

With these bounds, \eqref{hhh} leads to
\begin{align*}
&\left|N_{\Omega}(\mu)-|\Omega|\mu^2+\mu/2\right|\lesssim
\varepsilon^{-1}+\mu
\varepsilon+\\
&\qquad \qquad \qquad \mu\left|\sum_{k\in \mathbb{Z}^2_*}
|k|^{-1}I(2\pi \mu |k|, k/|k|)\widehat{\rho}(\varepsilon k)e^{-2\pi
i (1/4\pm 2\varepsilon)k_2} \right|.
\end{align*}

We split the last sum depending on whether $|k|\in \mathfrak{C}_0$
or $|k|\in \mathbb{Z}^2_*\setminus \mathfrak{C}_0$.

For the former part we are allowed to restrict $k$ to the first
quadrant due to symmetry.

The contribution of the latter part is
$O(\mu^{1/3}+\varepsilon^{-1})$. (The $O(\mu^{1/3})$ term can be
ignored since $\mu^{1/3}<\varepsilon^{-1}+\mu \varepsilon$.) This
estimate follows from integration by parts. We discuss in three
cases as follows.

If $k=(0, k_2)$, $k_2\in \mathbb{Z}_*$, then $I(2\pi \mu |k|,
k/|k|)\lesssim (\mu |k_2|)^{-2/3}$. (To prove this one can split the
integral $I$ into two parts over $[-1, 1-\delta]$ and $[1-\delta,
1]$, estimate them by integration by parts and trivial estimate
respectively, then balance the resulting bounds by choosing
$\delta=(\mu |k_2|)^{-2/3}$.) Therefore these $k$'s produce an
$O(\mu^{1/3})$ bound.

If $k=(k_2, k_2)$, $k_2\in \mathbb{Z}_*$, the same argument as above
proves an $O(\mu^{1/3})$ bound.

We are then left with the case $k\in \mathbb{Z}^2_*$ with
$-\infty<k_2/k_1<1$. For these points the phase function of $I(2\pi
\mu |k|, k/|k|)$ has no critical point. We can integrate by parts
directly. Thus
\begin{equation*}
|I(2\pi \mu |k|, k/|k|)|\lesssim \frac{1}{\mu
|k|}\left(\left|\frac{k_2}{k_1}\right|+\left|\frac{k_1+k_2}{k_1-k_2}\right|+\frac{|k|^2}{k_1(k_1-k_2)}\right).
\end{equation*}
Using this bound we get an $O(\varepsilon^{-1})$ bound. This
concludes the proof.
\end{proof}

\begin{remark}
One can easily get that $P_{\Omega}(\mu)=-\mu/2+O(\mu^{2/3})$ by
using Lemma \ref{1-reduction} and Proposition
\ref{estimate-key-integral} and \ref{asym-R^2} for all integral
points in $\mathfrak{C}_0$.

If we want a bound better than $O(\mu^{2/3})$ the estimation is
harder. Our idea is to decompose the cone $\mathfrak{C}_0$ into
different parts (see Figure \ref{figure2}) depending on a parameter
$\Delta$ and to apply Proposition \ref{estimate-key-integral} for
$D_1$ and $D_3$ and the AB-process (Proposition \ref{claim1}) for
$D_2$. We gain in the latter part, balance the bound with that of
the former part by choosing a proper $\Delta$, and then get a better
bound as a result.
\end{remark}

\begin{figure}
\includegraphics[width=0.35\textwidth]{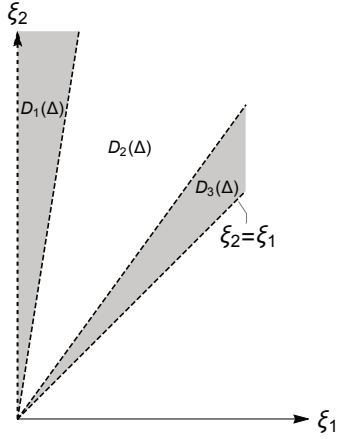}\caption{A decomposition of $\mathfrak{C}_0$}\label{figure2}
\end{figure}

\begin{proof}[Proof of Theorem \ref{lattice-thm}]
Let $\Delta>0$ be a parameter. (We will in fact choose
$\Delta=\mu^{1/495}$ and $\varepsilon=\mu^{-1/3-1/495}$ at the end.)
We decompose $\mathfrak{C}_0$ into three disjoint cones depending on
$\Delta$ (see Figure \ref{figure2}), namely
\begin{equation*}
\mathfrak{C}_0=D_1(\Delta)\cup D_2(\Delta) \cup D_3(\Delta),
\end{equation*}
where
\begin{equation*}
D_2(\Delta)=\{\xi\in \mathfrak{C}_0: K_{\xi}\leq \Delta\}
\end{equation*}
and $D_1(\Delta)$ ($D_3(\Delta)$) is a thin cone on the left
(right). Lemma \ref{angle-curv} shows that angles of $D_1$ and $D_3$
are both $\asymp 1/\Delta$.

We apply Proposition \ref{estimate-key-integral} and Lemma
\ref{angle-curv}, compare the sums with integrals in polar
coordinates, and get
\begin{align}
&\mu\left|\sum_{k\in D_1(\Delta)} |k|^{-1}I(2\pi \mu |k|,
k/|k|)\widehat{\rho}(\varepsilon k)e^{-2\pi i (1/4\pm
2\varepsilon)k_2} \right|\nonumber \\
&\quad \lesssim \sum_{k\in D_1(\Delta)}
\left(\mu^{1/2}|k|^{-3/2}K_{k}^{-1/2}
+\mu^{1/3}|k|^{-5/3}\right)|\widehat{\rho}(\varepsilon
k)|\nonumber\\
&\quad \lesssim
\Delta^{-3/2}\mu^{1/2}\varepsilon^{-1/2}+\Delta^{-1}\mu^{1/3}\varepsilon^{-1/3}.\label{D1D3}
\end{align}
Note that the first term of \eqref{D1D3} is larger than the second
one if $\Delta \leq \mu^{1/3}\varepsilon^{-1/3}$.

Since $D_3$ is similar with $D_1$, we only treat $D_2$ below.
Applying Proposition \ref{asym-R^2} yields
\begin{equation}
\mu \sum_{k\in D_2(\Delta)} |k|^{-1}I(2\pi \mu |k|,
k/|k|)\widehat{\rho}(\varepsilon k)e^{-2\pi i (1/4\pm
2\varepsilon)k_2}=A S+E_1,\label{equ6:1}
\end{equation}
where $A$ is an absolute constant,
\begin{equation*}
S:=\mu^{1/2} \sum_{k\in D_2(\Delta)} |k|^{-3/2}
K_{k}^{-1/2}\widehat{\rho}(\varepsilon k)e^{-2\pi i \mu H_1(k)},
\end{equation*}
\begin{equation*}
H_1(k):=\langle k, x(k)\rangle+(1/4\pm 2\varepsilon)k_2/\mu,
\end{equation*}
and
\begin{equation}
E_1=O\left(\Delta^{5/2}\mu^{-1/2}+\Delta \log
(\varepsilon^{-1})+\varepsilon^{-1}\right).\label{equ6:2}
\end{equation}

We next introduce a partition of unity associated with
$\mathfrak{C}_0$, which will be used to split the sum $S$. For each
$\xi \in S^{1}\cap \mathfrak{C}_0$ there exists a cone (contained in
$\mathfrak{C}_0$)
\begin{equation*}
\mathfrak{C}(\xi, 2r(\xi)):=\mathop{\cup}\limits_{l>0} l B(\xi,
2r(\xi)),
\end{equation*}
where $r(\xi)=c_2K_\xi^{-q-2}/2$ and $c_2$ is the constant appearing
in the statement of Lemma \ref{non-vanishinglemma}. Note that Lemma
\ref{curv-in-ball} implies that $K_\eta \asymp K_\xi$ if $\eta\in
\mathfrak{C}(\xi, 2r(\xi))$. From the family of cones
$\{\mathfrak{C}(\xi, r(\xi)/2): \xi \in S^{1}\cap \mathfrak{C}_0\}$
we can choose, by a Vitali procedure, a sequence
$\{\mathfrak{C}(\xi_i, r(\xi_i)/2)\}_{i=1}^{\infty}$ such that these
cones still cover $\mathfrak{C}_0$ and that $\{\mathfrak{C}(\xi_i,
r(\xi_i))\}_{i=1}^{\infty}$ satisfies the bounded overlap property.
Denote
\begin{equation*}
\mathfrak{C}_i=\mathfrak{C}(\xi_i, r(\xi_i)).
\end{equation*}
Then the collection $\{\mathfrak{C}_i\}_{i=1}^{\infty}$ forms an
open cover of $\mathfrak{C}_0$. We can construct a partition of
unity $\{\psi_i\}_{i=1}^{\infty}$ such that
\begin{enumerate}[\upshape (i)]
\item  $\sum_{i}\psi_i \equiv 1$ on $\mathfrak{C}_0$ and $\psi_i\in C_0^{\infty}(\mathfrak{C}_i)$;
\item  each $\psi_i$ is positively homogeneous of degree zero;
\item  $|D^{\nu}\psi_i|\lesssim_{|\nu|} K_{\xi_i}^{(q+2)|\nu|}$ on $\mathscr{C}_1:=\{\xi \in \mathbb{R}^2: 1/2 \leq |\xi| \leq 2 \}$.
\end{enumerate}

From the family $\{\mathfrak{C}_i\}_{i=1}^{\infty}$ we can find a
subfamily $\{\mathfrak{C}_i\}_{i\in \mathscr{A}}$ which covers
$D_2(\Delta)$, where $\mathscr{A}=\mathscr{A}(\Delta)$ is an index
set such that $i\in \mathscr{A}$ if and only if $\mathfrak{C}_i$
intersects $D_2(\Delta)$. Since $r(\xi_i)\gtrsim \Delta^{-q-2}$ for
any $i\in \mathscr{A}$, a size estimate gives that
$\#\mathscr{A}\lesssim \Delta^{q+2}$.

We also introduce a standard dyadic decomposition. Let $\varphi \in
C_0^{\infty}(\mathscr{C}_1)$ be a real nonnegative radial function
such that$\sum_j \varphi(\xi/2^j)\equiv 1$ if $\xi \neq 0$.

Then we can split the sum $S$ as follows:
\begin{equation}
S=\mu^{1/2}\sum_{i\in \mathscr{A}}\sum_{j\in \mathbb{N}} S_{i,
j}+O(\Delta^{-3/2}\mu^{1/2}\varepsilon^{-1/2}),\label{equ6:3}
\end{equation}
where
\begin{equation*}
S_{i, j}:=\sum_{k\in \mathbb{Z}^2}
\psi_i(2^{-j}k)\varphi(2^{-j}k)|k|^{-3/2}K_{k}^{-1/2}\widehat{\rho}(\varepsilon
k)e^{-2\pi i \mu H_1(k)}.
\end{equation*}
The last error term in \eqref{equ6:3} comes out due to the small
difference between $D_2(\Delta)$ and the support of $\sum_{i\in
\mathscr{A}}\psi_i$. Its estimate is just like \eqref{D1D3}.

For any fixed $i\in \mathscr{A}$ and $q\in \mathbb{N}$ we use the
results in Section \ref{nonvanishing-det} with $H$ replaced by
$H_1$. This essentially changes nothing since these two functions
only differ by a linear term containing a uniformly bounded
coefficient. By Lemma \ref{non-vanishinglemma} and the homogeneity
of $H_1$ there exist two orthogonal integral vectors $v_1(\xi_i)$
and $v_2(\xi_i)$ such that
\begin{equation}
|h_q(\eta, v_1, v_2)|\gtrsim K_{\xi_i}^{2q^2+6q+2} \quad \textrm{if}
\ \eta \in \mathop{\cup}\limits_{1/4\leq l\leq 4} l B(\xi_i,
2r(\xi_i)).\label{determinant}
\end{equation}
Let $L=[\mathbb{Z}^2:\mathbb{Z}v_1\oplus\mathbb{Z}v_2]$ be the index
of the lattice spanned by $v_1$ and $v_2$ in the lattice
$\mathbb{Z}^2$. Then there exist vectors $b_l\in \mathbb{Z}^2$
($l=1, \ldots, L$) such that
\begin{equation*}
\mathbb{Z}^2=\biguplus_{l=1}^{L}(\mathbb{Z}v_1+\mathbb{Z}v_2+b_l).
\end{equation*}
It follows from Lemma \ref{non-vanishinglemma} that $L\lesssim
K_{\xi_i}^{2q+2}$ and $|b_l|\lesssim K_{\xi_i}^{q+1}$.

Let $N, j\in \mathbb{N}$ be arbitrarily fixed. Write
$k=m_1v_1+m_2v_2+b_l$. Then
\begin{equation}
|S_{i,
j}|=\left|K_{\xi_i}^{-1/2}2^{-3j/2}(1+2^j\varepsilon)^{-N}\sum_{l=1}^{L}S(T,
M; G_l, F_l)\right|,\label{formula2}
\end{equation}
where $T=\mu 2^j$, $M=K_{\xi_i}^{-q-1}2^j$,
\begin{equation*}
F_l(y)=H_1\left(2^{-j}(Mv_1y_1+Mv_2y_2+b_l)\right),
\end{equation*}
and
\begin{equation*}
G_l(y)=K_{\xi_i}^{1/2}2^{3j/2}(1+2^j\varepsilon)^{N} U_{i,
j}(Mv_1y_1+Mv_2y_2+b_l)
\end{equation*}
with
\begin{equation*}
U_{i,
j}(k)=\psi_i(2^{-j}k)\varphi(2^{-j}k)|k|^{-3/2}K_{k}^{-1/2}\widehat{\rho}(\varepsilon
k).
\end{equation*}
We consider the function $F_l$ over the convex domain
\begin{equation*}
\mathcal{D}_l=\{y\in \mathbb{R}^2 : 2^{-j}(Mv_1y_1+Mv_2y_2+b_l)\in
\mathop{\cup}\limits_{1/4\leq l\leq 4} l B(\xi_i, 2r(\xi_i)) \}.
\end{equation*}
The support of $G_l$ satisfies
\begin{equation*}
\supp(G_l)\subset \{y\in \mathbb{R}^2 :
2^{-j}(Mv_1y_1+Mv_2y_2+b_l)\in \overline{\mathscr{C}_1\cap
\mathfrak{C}_i} \}\subset \mathcal{D}_l.
\end{equation*}

We apply to $S(T, M; G_l, F_l)$ Proposition \ref{claim1} (with
$G=G_l$, $F=F_l$, $K=K_{\xi_i}$, $\mathcal{D}=\mathcal{D}_l$, and
$q=3$) when $C\Delta^{30}\leq 2^j\leq C'\mu^{4/5}$ for some proper
constants $C$ and $C'$ such that the assumptions of Proposition
\ref{claim1} are satisfied, which can be verified by using Lemma
\ref{non-vanishinglemma}, \eqref{determinant}, and the following
facts: if $K_{\xi_i}^{q+1}\leq 2^j$ then $\mathcal{D}_l\subset c_0
B(0, 1)$ for some constant $c_0$ (depending only on $q$);
\begin{equation*}
\dist \left(\left(\mathop{\cup}\limits_{1/4\leq l\leq 4} l B(\xi_i,
2r(\xi_i)) \right)^c \, , \, \overline{\mathscr{C}_1\cap
\mathfrak{C}_i}\right)\geq c_2 K_{\xi_i}^{-q-2}/8;
\end{equation*}
and
\begin{equation*}
D^{\nu}U_{i, j} \lesssim
K_{\xi_i}^{-1/2+(q+2)|\nu|}2^{-(3/2+|\nu|)j}(1+2^j\varepsilon)^{-N}.
\end{equation*}

Depending on the size of $2^j$ we split \eqref{equ6:3} as follows:
\begin{align}
S&=\mu^{1/2}\sum_{C\Delta^{30}\leq 2^j\leq C'\mu^{4/5}}\sum_{i\in
\mathscr{A}}S_{i, j} +O(\Delta^{-3/2}\mu^{1/2}\varepsilon^{-1/2})\label{lll}\\
&\quad+\mu^{1/2}\left(\sum_{2^j<C\Delta^{30}}+\sum_{2^j>C'\mu^{4/5}}\right)\sum_{i\in
\mathscr{A}}S_{i, j}.\label{mmm}
\end{align}
By \eqref{formula2}, Proposition \ref{claim1}, $1\lesssim
K_{\xi_i}\lesssim \Delta$, and bounds of $\#\mathscr{A}$ and $L$, we
get that the double sum on the right side of \eqref{lll} is bounded
by
\begin{equation}
\Delta^{68/11}\mu^{1/2+1/22}\varepsilon^{-7/22}+\Delta^{399/44}\mu^{5/11}\varepsilon^{-19/44-4\epsilon}.\label{ooo}
\end{equation}

Balancing $\Delta^{68/11}\mu^{1/2+1/22}\varepsilon^{-7/22}$ with
$\Delta^{-3/2}\mu^{1/2}\varepsilon^{-1/2}$ (in \eqref{D1D3})
yields
\begin{equation*}
\Delta=(\mu\varepsilon^4)^{-1/169}.
\end{equation*}
With this choice of $\Delta$ we continue to balance
$\Delta^{68/11}\mu^{1/2+1/22}\varepsilon^{-7/22}$ with
$\mu\varepsilon$ in \eqref{nnn} and get
$\varepsilon=\mu^{-1/3-1/495}$ and $\Delta=\mu^{1/495}$. This
readily leads to the desired bound stated in Theorem
\ref{lattice-thm} after we verify (by simple calculus) that the
bounds $\Delta^{-1}\mu^{1/3}\varepsilon^{-1/3}$ in \eqref{D1D3},
$E_1$ in \eqref{equ6:2},
$\Delta^{399/44}\mu^{5/11}\varepsilon^{-19/44-4\epsilon}$ in
\eqref{ooo}, and the sums in \eqref{mmm} are all $\lesssim
\mu^{2/3-1/495}$.
\end{proof}


\section{From Lattice Points Counting to Eigenvalue Counting}\label{spec-to-lattice}

In this section we prove Theorem \ref{spectrum-thm}. We mainly follow Y. Colin de Verdi\`{e}re's argument in \cite{colin}.

By separating variables, one can easily find all the eigenvalues of the Laplace operator associated with the unit disk, which are
\[\sigma_D=\{(j_{n,k})^2: n\in\mathbb{Z} ,\,k=1,2,\cdots\}\]
for the Dirichlet boundary condition, and
\[\sigma_N=\{(j^{\prime}_{n,k})^2: n\in\mathbb{Z} ,\,k=1,2,\cdots\}\]
for the Neumann boundary condition, where  $j_{n,k}$ is the $k$th  zero of the Bessel function $J_n$, and $j^{\prime}_{n,k}$ is the $k$th zero of its derivative $J^{\prime}_n$. Note that the Bessel function has the  symmetry $J_{-n}(x)=(-1)^nJ_n(x)$, which implies $j_{-n,k}=j_{n,k}$ and $j'_{-n,k}=j'_{n,k}$.

To describe the asymptotic behavior of large $j_{n,k}$'s, Y. Colin de Verdi\`{e}re introduced a function $F$ defined on the cone
 $S=\{(t,s)\in\mathbb{R}^2: s\ge\max(0,-t)\}$, which is characterized by
\begin{enumerate}
\item[(a)] $F: S \to \mathbb R$ is homogeneous of degree 1,
\item[(b)] $F \equiv 1$ on  $\mathrm{graph}(g)$ (which is a section of $S$, see Figure \ref{figure1.1}).
\end{enumerate}
As observed by Colin de Verdi\`ere, the cone $S$ admits the following very nice ``involution invariance",
\[(x,y-x-1/4) \in S \Longleftrightarrow (-x,y-1/4) \in S.\]
Moreover, the function $F$ is also invariant under such involution:
\begin{lemma}\label{F-nk}
For any $(x, y-x-1/4) \in S$, one has 
\[F(x, y-x-1/4)=F(-x, y-1/4).\] 
\end{lemma}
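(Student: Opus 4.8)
The plan is to exploit the homogeneity of $F$ (property (a)) together with the defining normalization condition (property (b)), reducing the claimed identity to a statement purely about the geometry of the region $\Omega$ and its curve $\Gamma_g$. Since $F$ is homogeneous of degree $1$ and equals $1$ on $\mathrm{graph}(g)$, for any point $P=(t,s)$ in the interior of the cone $S$ we have $F(P)=|P|/|P^{*}|$ where $P^{*}$ is the unique point on the ray through $P$ that lies on $\mathrm{graph}(g)$; equivalently, $F(P)=\lambda^{-1}$ where $\lambda>0$ is the unique scalar with $\lambda P\in\mathrm{graph}(g)$. So the whole identity $F(x,y-x-1/4)=F(-x,y-1/4)$ is equivalent to: the two rays $\mathbb{R}_{>0}\cdot(x,y-x-1/4)$ and $\mathbb{R}_{>0}\cdot(-x,y-1/4)$ hit $\mathrm{graph}(g)$ at parameter values $\lambda_1,\lambda_2$ that are \emph{equal}.

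First I would set up the parametrization. A point on $\mathrm{graph}(g)$ has the form $(\tau,g(\tau))$ with $g(\tau)=(\sqrt{1-\tau^2}-\tau\arccos\tau)/\pi$, $\tau\in[-1,1]$. The condition $\lambda_1(x,y-x-1/4)=(\tau_1,g(\tau_1))$ gives $\tau_1=\lambda_1 x$ and $g(\tau_1)=\lambda_1(y-x-1/4)$, hence $g(\lambda_1 x)/(\lambda_1 x)=(y-x-1/4)/x$, i.e. $g(\tau_1)/\tau_1 = (y-x-1/4)/x = (y-1/4)/x - 1$. Similarly $\lambda_2(-x,y-1/4)=(\tau_2,g(\tau_2))$ gives $\tau_2=-\lambda_2 x$ and $g(\tau_2)/\tau_2 = (y-1/4)/(-x) = -(y-1/4)/x$. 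Writing $u=(y-1/4)/x$, the first equation says $g(\tau_1)/\tau_1 = u-1$ and the second says $g(\tau_2)/\tau_2 = -u$. The key computational identity to establish is then the symmetry of $g$: I claim $g(-\tau)/(-\tau) = g(\tau)/\tau + 1$, equivalently $-g(-\tau)/\tau = g(\tau)/\tau + 1$, i.e. $g(\tau)+g(-\tau) = -\tau$. This follows directly from the closed form: $g(\tau)+g(-\tau)=\frac{1}{\pi}\big(\sqrt{1-\tau^2}-\tau\arccos\tau + \sqrt{1-\tau^2}+\tau\arccos(-\tau)\big)=\frac{1}{\pi}\big(2\sqrt{1-\tau^2}+\tau(\arccos(-\tau)-\arccos\tau)\big)=\frac{1}{\pi}\big(2\sqrt{1-\tau^2}+\tau(\pi-2\arccos\tau)\big)$; this is visibly not $-\tau$, so I will recheck the sign convention in the involution. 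The correct bookkeeping should be: with $\tau_2=-\tau_1$ one gets $g(\tau_2)/\tau_2 = g(-\tau_1)/(-\tau_1)$, and using $g(\tau)+g(-\tau)$ expressed via $\arccos$ identities one verifies $g(-\tau_1)/(-\tau_1) = g(\tau_1)/\tau_1 + 1$ precisely matches the requirement $-u = (u-1)+1$... wait, $-u \ne u$. So I must instead show $\lambda_1=\lambda_2$ \emph{without} forcing $\tau_2=-\tau_1$; rather, the correspondence between the two graph-hitting points is itself the content, and one shows it via the substitution $\tau\mapsto$ (a suitable transform) making the $\arccos$ terms match.

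Concretely, I would proceed as follows. Given $(x,y-x-1/4)\in S$, let $(\tau_1,g(\tau_1))$ be its intersection with $\mathrm{graph}(g)$, so $\lambda_1 = g(\tau_1)/(y-x-1/4) = \tau_1/x$. I want to produce $\tau_2$ with $\lambda_1(-x,y-1/4)=(\tau_2,g(\tau_2))$; this forces $\tau_2=-\lambda_1 x=-\tau_1$ and requires checking $g(-\tau_1)=\lambda_1(y-1/4)$. Now $\lambda_1(y-1/4)=\lambda_1(y-x-1/4)+\lambda_1 x = g(\tau_1) + \tau_1 \cdot (x/\tau_1)\cdot\lambda_1$... using $\lambda_1 x = \tau_1$ this is $g(\tau_1)+\tau_1$. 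So the identity to prove reduces exactly to
\[
g(-\tau) = g(\tau) + \tau \qquad \text{for all } \tau\in[-1,1],
\]
i.e. $g(\tau)-g(-\tau)=-\tau$. From the closed form: $g(\tau)-g(-\tau)=\frac{1}{\pi}\big(-\tau\arccos\tau - \tau\arccos(-\tau)\big) = \frac{1}{\pi}\big(-\tau(\arccos\tau+\arccos(-\tau))\big) = \frac{1}{\pi}(-\tau\cdot\pi) = -\tau$, using $\arccos\tau+\arccos(-\tau)=\pi$. This is the whole proof. The main (and only nontrivial) point is recognizing that the degree-one homogeneity reduces everything to the elementary functional equation $g(-\tau)-g(\tau)=\tau$, which is then immediate from $\arccos\tau+\arccos(-\tau)=\pi$; I should also note the edge case where $x=0$ (the point lies on the boundary ray $t=0$ of $S$), which must be handled either by continuity of $F$ or by a direct check, and confirm that the involution indeed maps $S$ to $S$ so that both sides of the asserted identity are defined. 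I expect the only place requiring care is the precise sign/offset bookkeeping in passing from the involution $(x,y-x-1/4)\leftrightarrow(-x,y-1/4)$ to the functional equation for $g$, together with ensuring the uniqueness of the intersection point $(\tau,g(\tau))$ on each ray (which follows since $t\mapsto g(t)/t$ is monotone on the relevant range — a fact about $\Gamma_g$ already implicit in Section~\ref{oscillatory-integral}).
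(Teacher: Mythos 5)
Your final argument is correct and is essentially the paper's proof: both reduce the identity, via degree-one homogeneity of $F$, to the functional equation $g(-\tau)=g(\tau)+\tau$, which follows from $\arccos(-\tau)=\pi-\arccos\tau$. The false start in your middle paragraph (the incorrect candidate identity $g(\tau)+g(-\tau)=-\tau$) is harmless since you recover the correct bookkeeping, and the concluding computation matches the paper's.
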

\begin{proof}
Set $l=F(-x, y-1/4)$. Then by the definition of $F$, 
\[
g(-x/l)=(y-1/4)/l.
\]
It follows from definition of $g$ that 
\[
\frac{y-1/4}l = \frac 1\pi \left(\sqrt{1-(\frac xl)^2}+\frac {x}l (\pi-\arccos \frac {x}l) \right) = g(\frac xl)+\frac xl,
\]
i.e. $g(x/l) = (y-x-1/4)/l$. This implies $F(x,y-x-1/4)=l$. 
\end{proof}
Applying the method of stationary phase to the integral representation of the Bessel function one can prove
\begin{lemma}[\cite{colin}]\label{sta}
If $j_{n,k}>(1+c_0)|n|$ for some constant $c_0$, then
\[
 j_{n,k}=\left\{
\begin{array}{ll}
F(n,k-\frac{1}{4})+O(\frac{1}{n+k+1}),\;&\mathrm{if}\;n\ge 0,\\
F(n,k+|n|-\frac{1}{4})+O(\frac{1}{|n|+k+1}),\;&\mathrm{if}\;n<0.
\end{array}
\right.
\]
\end{lemma}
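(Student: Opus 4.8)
The final statement is Lemma~\ref{sta}, which is attributed to Colin de Verdi\`ere. Since it is cited from \cite{colin}, I would present a proof proposal that reconstructs the argument.

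\medskip

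The plan is to start from a suitable integral representation of the Bessel function $J_n$ and analyze the large zeros via the method of stationary phase. Recall the classical formula
\[
J_n(x) = \frac{1}{\pi}\int_0^\pi \cos(n\theta - x\sin\theta)\,d\theta - \frac{\sin(n\pi)}{\pi}\int_0^\infty e^{-n\tau - x\sinh\tau}\,d\tau;
\]
in the regime $x > (1+c_0)|n|$ the second (exponentially small, and anyway absent for integer $n$) term is negligible, so one is reduced to studying the oscillatory integral $\int_0^\pi e^{i(x\sin\theta - n\theta)}\,d\theta$ together with its complex conjugate. Writing $x = \lambda$, $n = \lambda\alpha$ with $|\alpha| < 1/(1+c_0) < 1$, the phase $\phi(\theta) = \sin\theta - \alpha\theta$ has a nondegenerate critical point at $\theta_0 = \arccos\alpha$ (with a second one contributing a conjugate term). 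Applying stationary phase gives an asymptotic expansion
\[
J_n(\lambda\alpha)\text{-type expression} = \sqrt{\tfrac{2}{\pi\lambda}}\,\frac{1}{(1-\alpha^2)^{1/4}}\cos\!\Big(\lambda\big(\sqrt{1-\alpha^2} - \alpha\arccos\alpha\big) - \tfrac{\pi}{4}\Big) + O(\lambda^{-3/2}),
\]
uniformly for $\alpha$ in compact subsets of $(-1,1)$ (this uniformity, as $\alpha \to \pm 1$ only within a fixed compact range, is exactly what the hypothesis $j_{n,k} > (1+c_0)|n|$ buys us). The amplitude never vanishes, so the zeros of $J_n$ in this range are governed by the zeros of the cosine factor.

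\medskip

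The next step is to solve the equation asymptotically. Writing $\Psi(\alpha) := \big(\sqrt{1-\alpha^2} - \alpha\arccos\alpha\big) = \pi g(\alpha)$ with $g$ as in \eqref{def-g}, the $k$-th zero $j_{n,k}$ with $n \ge 0$ satisfies, to leading order, $\lambda\,\Psi(n/\lambda) - \pi/4 = \pi(k - \tfrac12) + \tfrac{\pi}{2}$, i.e. $\lambda\,\pi g(n/\lambda) = \pi(k - \tfrac14)$, that is $\lambda g(n/\lambda) = k - 1/4$. By the defining properties (a)--(b) of $F$ — homogeneity of degree one and $F\equiv 1$ on $\mathrm{graph}(g)$ — the relation $\lambda g(n/\lambda) = k - 1/4$ is precisely equivalent to $F(n, k - 1/4) = \lambda$. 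Hence $j_{n,k} = F(n, k-1/4) + (\text{error})$, and one tracks the error: the next term in the stationary-phase expansion is $O(\lambda^{-1})$ relative to the main term divided by $\Psi'(n/\lambda)$; since $\lambda \asymp n + k + 1$ in this range and $\Psi'$ stays bounded below (again by the compactness of the $\alpha$-range), inverting the cosine equation by the implicit function theorem yields the stated error $O(1/(n+k+1))$. For $n < 0$ one uses the symmetry $j_{-n,k} = j_{n,k}$ of the Bessel zeros: $j_{n,k} = j_{|n|,k} = F(|n|, k - 1/4) + O(1/(|n|+k+1))$, and then Lemma~\ref{F-nk} (with $x = -|n| = n$, $y - 1/4 = k - 1/4$, so $y - x - 1/4 = k + |n| - 1/4$) converts $F(|n|, k-1/4) = F(-|n|, k-1/4)$... more carefully, $F(-x, y-1/4) = F(x, y - x - 1/4)$ gives $F(|n|, k - 1/4) = F(-|n|, k + |n| - 1/4) = F(n, k + |n| - 1/4)$, which is exactly the claimed formula in the $n < 0$ case.

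\medskip

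The main obstacle is the \emph{uniformity} of the stationary-phase asymptotics as the parameter $\alpha = n/\lambda$ ranges over the (fixed, but possibly large) compact subinterval of $(-1,1)$ allowed by $j_{n,k} > (1+c_0)|n|$, together with making the error term genuinely of the stated shape $O(1/(n+k+1))$ rather than merely $O(1/\lambda)$ with an $\alpha$-dependent constant — one must check that all implied constants (the lower bound on $|\Psi'|$, the amplitude lower bound $(1-\alpha^2)^{1/4} \gtrsim 1$, the remainder bounds in the asymptotic expansion) depend only on $c_0$. A secondary point requiring care is the passage from "the cosine is small" to "$\lambda$ is close to $F(n,k-1/4)$": one should verify that consecutive zeros of $J_n$ in this range are separated by $\asymp 1/\Psi'(\alpha) \asymp 1$ in $\lambda$, so that the approximate equation has a unique solution near each $F(n, k-1/4)$ and the labelling by $k$ is consistent. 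Since this is a known result, I would either cite \cite{colin} directly for the details or include the above sketch with the uniformity estimates spelled out.
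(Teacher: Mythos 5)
The paper offers no proof of this lemma --- it cites \cite{colin} and says only that it follows by applying stationary phase to the integral representation of the Bessel function, which is precisely the route you take, and your reconstruction (Debye asymptotics with critical point $\theta_0=\arccos\alpha$ and phase value $\pi g(\alpha)$, inversion of the cosine condition using that $\tfrac{d}{d\lambda}\bigl[\lambda\pi g(n/\lambda)\bigr]=\pi\sqrt{1-\alpha^2}$ is bounded below on $|\alpha|\le 1/(1+c_0)$, and the reflection identity of Lemma~\ref{F-nk} for $n<0$) is correct. One small slip: the zero condition should read $\lambda\pi g(n/\lambda)-\tfrac{\pi}{4}=\pi(k-\tfrac12)$, not $\pi(k-\tfrac12)+\tfrac{\pi}{2}$; only the former yields the conclusion $\lambda g(n/\lambda)=k-\tfrac14$ that you (correctly) go on to use.
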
 

Thanks to this estimate, one can reduce the problem of counting the Dirichlet eigenvalues in the region $j_{n,k}>(1+c_0)|n|$ into the problem of counting corresponding lattice points (translated by $(0,-1/4)$), see (\ref{part1}) below. 
It remains to count eigenvalues inside the region $j_{n,k} <(1+C_0)|n|$ (with $C_0>c_0$, which will be specified below). In view of Lemma \ref{F-nk}, it is enough to work on points with $n>0$. F. Olver (\cite{olver}) obtained the following asymptotic formula
\begin{equation}\label{jnknearboundary}
j_{n,k} = n(1+\psi(t_k/n^{2/3}))+O(n^{-1}),
\end{equation}
where $\psi$ is a smooth function with $\psi(0)=0, \psi'(0)>0$, and $t_k$ is the $k$-th negative zero of the Airy function. 

For zeros of Bessel functions, we also have the following well-known upper and lower bounds estimates (see, for example, (5.3) in \cite{CGJ})
\[A_1 \left(k+k^{2/3}n^{1/3} \right) \le j_{n,k}-n \le A_2 \left(k+k^{2/3}n^{1/3}\right),\]
where $A_1, A_2$ are positive constants. We will let $\tilde c_0, \widetilde C_0$ be the solutions to the following equations
\[A_1(\tilde c_0+\tilde c_0^{2/3})=c_0 \quad \text{and} \quad A_2(\widetilde C_0+\widetilde C_0^{2/3}) = C_0.\]
It is easy to check 
\begin{enumerate}
\item If $j_{n,k}<(1+c_0)n$, then $k <\tilde c_0 n $.
\item If $k < \widetilde C_0  n$, then $j_{n,k}<(1+  C_0) n $. 
\end{enumerate}
Then in what follows, we will choose $C_0$ small enough so that Lemma \ref{Nkerror} holds, and choose $c_0$ small so that $\tilde c_0<\widetilde C_0$. 

For simplicity we only consider the parts with $n>0$. 
Using the asymptotic formula (\ref{jnknearboundary}), Colin de Verdi\`{e}re was able to prove
\begin{lemma}[\cite{colin}]\label{Nkerror} For each $k \ge 1$, let
\[\aligned
N_k(\mu) = \#A_{k,\mu}&:=\#\{n\in\mathbb{Z}:(n,k- {1}/{4})\in\mu\Omega,\, k\le \widetilde C_0n\}, \\
\mathcal N_k(\mu) = \#\mathcal A_{k,\mu}&:=\#\{n\in\mathbb{Z}:j_{n,k}\le\mu,\,   k\le \widetilde C_0 n\},
\endaligned\]
then for $C_0$ small enough there exists constant $C$ so that for all $k$,
\[|N_k(\mu)-\mathcal{N}_k(\mu)|\le N_k(\mu+ {C}/{\mu})-N_k(\mu-{C}/{\mu})+C\mu^{1/3}k^{-4/3}.\]
\end{lemma}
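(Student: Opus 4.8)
\textbf{Proof proposal for Lemma \ref{Nkerror}.}

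The plan is to compare, for each fixed $k$, the two counting sets $A_{k,\mu}$ and $\mathcal A_{k,\mu}$ by exploiting Olver's asymptotic formula \eqref{jnknearboundary} together with the defining homogeneity and normalization of $F$. First I would translate the condition $(n,k-1/4)\in\mu\Omega$ (equivalently $(n,k-1/4)\in\mu S$ with $F(n,k-1/4)\le\mu$, since inside the relevant range of $n$ the section of the cone agrees with $\mu\Omega$) into a statement about a smooth "boundary curve" $n\mapsto \phi_k(n)$ describing where $F(n,k-1/4)=\mu$; similarly the condition $j_{n,k}\le\mu$ defines, via monotonicity of $j_{n,k}$ in $n$ for $n$ in the band $k\le \widetilde C_0 n$, a curve $n\mapsto \tilde\phi_k(n)$. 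The symmetric difference of the two index sets $A_{k,\mu}\,\triangle\,\mathcal A_{k,\mu}$ consists of integers $n$ lying between these two curves; to bound its cardinality it suffices to bound the horizontal gap $|\phi_k(n)-\tilde\phi_k(n)|$ and to count integers in a band of that width, which produces the term $N_k(\mu+C/\mu)-N_k(\mu-C/\mu)$ once the gap is shown to correspond to shifting $\mu$ by $O(1/\mu)$.

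Next I would quantify this gap. Using \eqref{jnknearboundary}, $j_{n,k}=n(1+\psi(t_k/n^{2/3}))+O(n^{-1})$, and using that on $\mathrm{graph}(g)$ one has $F\equiv 1$ so that by homogeneity $F(n,k-1/4)$ is precisely the value $\mu$ for which $(n/\mu,(k-1/4)/\mu)\in\mathrm{graph}(g)$, I would Taylor-expand near the cusp $P_1$ (where the slope is rational and the curvature blows up) to get an explicit relation of the form $F(n,k-1/4)=n(1+\Psi(c\,k/n^{2/3}))+(\text{lower order})$ for a smooth $\Psi$ matching $\psi$ to leading order — this is exactly the content that makes the lattice model the right one near the boundary. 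Since $\psi(0)=0$, $\psi'(0)>0$, in the band $k\le\widetilde C_0 n$ (so $k/n^{2/3}\lesssim n^{1/3}$, i.e. $t_k/n^{2/3}$ is bounded once $C_0$ is small) the two expressions for $j_{n,k}$ and $F(n,k-1/4)$ differ by $O(n^{-1})$ plus discrepancies in the argument of $\psi$ versus $\Psi$; choosing $C_0$ small forces the relevant arguments into a neighborhood of $0$ where $\psi$ and $\Psi$ agree to sufficiently high order, leaving an error of size $O(\mu^{1/3}k^{-4/3})$ (the $k^{-4/3}$ coming from differentiating the near-cusp expansion and the $\mu^{1/3}$ from the width-to-count conversion along the steep part of the boundary). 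Adding the $O(n^{-1})=O(\mu^{-1})$ genuine error from Olver's formula contributes the $N_k(\mu\pm C/\mu)$ term.

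The main obstacle I expect is the bookkeeping near the cusp $P_1(-1,1)$: one must show that Olver's transition-region expansion \eqref{jnknearboundary} and the geometric quantity $F(n,k-1/4)$ (coming from the Colin de Verdi\`ere lattice model built out of $g$) really do match to an order high enough that the mismatch is absorbed into $O(\mu^{1/3}k^{-4/3})$ uniformly in $k$ with $k\le\widetilde C_0 n$, \emph{for all} $\mu$ and not just asymptotically. Here the choice of $C_0$ sufficiently small is crucial — it is what confines $t_k/n^{2/3}$ (equivalently the relevant argument of $\Psi$) to a fixed small interval around the origin on which $\psi$ and $\Psi$ have the same Taylor jet up to the needed order, via the normalization $\psi(0)=0$, $\psi'(0)>0$ and the definition of $F$. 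Once that matching is in place, converting the pointwise gap $|F(n,k-1/4)-j_{n,k}|\lesssim \mu^{1/3}k^{-4/3}\cdot\mu^{-?}+O(\mu^{-1})$ into a count of integer $n$'s between the two curves is a routine monotonicity-plus-counting argument, yielding exactly the claimed inequality, and the role of Lemma \ref{F-nk} is to let us restrict attention to $n>0$ throughout.
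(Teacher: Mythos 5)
First, note that the paper does not actually prove this lemma: it is quoted from \cite{colin}, so there is no in-paper proof to compare against, and your proposal has to stand on its own. Your overall strategy is the right one — rewrite $(n,k-1/4)\in\mu\Omega$ as $F(n,k-1/4)\le\mu$, compare the increasing functions $n\mapsto F(n,k-1/4)$ and $n\mapsto j_{n,k}$, and bound the symmetric difference by the number of integers in a thin band around the level set. (A small slip: for $k\le \widetilde C_0 n$ with $n>0$ the rescaled point $(n/\mu,(k-1/4)/\mu)$ approaches the cusp $P_2(1,0)$, not $P_1(-1,1)$.)

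The genuine gap is in the mechanism you propose for the error $O(\mu^{1/3}k^{-4/3})$. The correct fact is an exact identity, not an approximate matching: solving $\mu g(n/\mu)=k-1/4$ with $\mu=nz$ gives $\sqrt{z^2-1}-\arccos(1/z)=\tfrac{2}{3}(-\zeta)^{3/2}$ with $\zeta=\tilde a_k n^{-2/3}$ and $\tilde a_k=-\bigl(3\pi(k-1/4)/2\bigr)^{2/3}$, which is precisely the implicit equation defining Olver's function $z=1+\psi$. Hence $F(n,k-1/4)=n\bigl(1+\psi(\tilde a_k n^{-2/3})\bigr)$ with \emph{the same} $\psi$ as in \eqref{jnknearboundary}, and the whole discrepancy comes from replacing the true Airy zero $t_k$ by its leading asymptotic $\tilde a_k$: the expansion of Airy zeros gives $|t_k-\tilde a_k|=O(k^{-4/3})$, and propagating this through $n\psi(\cdot\,n^{-2/3})$ produces $n\cdot n^{-2/3}\cdot O(k^{-4/3})=O(n^{1/3}k^{-4/3})$, i.e.\ the claimed term (the remaining $O(n^{-1})$ from Olver's formula is what the $N_k(\mu\pm C/\mu)$ term absorbs). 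Your proposal instead posits two a priori different functions $\psi$ and $\Psi$ evaluated at the same argument and argues that "choosing $C_0$ small" confines the argument to a neighborhood of $0$ where they share a Taylor jet "up to the needed order". That cannot close the argument: if $\psi$ and $\Psi$ merely agreed to order $N$ at $0$, the resulting discrepancy would be $n\cdot O((k/n)^{2N/3})$, which is $O(n^{1/3}k^{-4/3})$ only for $k\lesssim n^{(N-1)/(N+2)}$ and never uniformly over the full band $k\le\widetilde C_0 n$; one genuinely needs $\Psi\equiv\psi$, with the $k^{-4/3}$ living in the Airy-zero asymptotics, a point your sketch never identifies. Relatedly, your heuristic that the $\mu^{1/3}$ comes from a "width-to-count conversion along the steep part of the boundary" is off: near $P_2$ one has $\partial_n F=\arccos(t)/\sqrt{1-t^2}\to 1$, so the conversion factor is $O(1)$ and the $\mu^{1/3}$ is the chain-rule factor $n\cdot n^{-2/3}$ just described. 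Once the bound $|F(n,k-1/4)-j_{n,k}|\lesssim n^{-1}+n^{1/3}k^{-4/3}$ is in place, your concluding monotonicity-and-counting step is essentially fine (modulo the usual care that an interval of length less than one can still contain a lattice point, which must be absorbed into the constants), but as written the central estimate is not established.
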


Thanks to these lemmas and Theorem \ref{lattice-thm}, we are able to prove the following proposition that gives us a nicer bound on the error between $N_{\Omega}(\mu)$ and $\mathcal{N}_{disk}(\mu)$ than the one in \cite{colin}.  
\begin{proposition}\label{error} There exists a constant $C$ such that
\[|\mathcal{N}_{disk}(\mu)-N_\Omega(\mu)|\le 3(N_\Omega(\mu+{C}/{\mu})-N_\Omega(\mu- {C}/{\mu}))+O(\mu^{1/3}).\]
\end{proposition}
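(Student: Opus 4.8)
The plan is to decompose both $\mathcal{N}_{disk}(\mu)$ and $N_\Omega(\mu)$ according to the two regimes identified in the discussion preceding the proposition: an ``interior'' regime where $j_{n,k} > (1+c_0)|n|$ (equivalently $k$ large relative to $|n|$), in which Lemma \ref{sta} applies, and a ``near-boundary'' regime $j_{n,k} < (1+C_0)|n|$ (equivalently $k$ small relative to $|n|$), in which Lemma \ref{Nkerror} applies. Because of the Bessel symmetry $j_{-n,k}=j_{n,k}$ and the involution invariance of Lemma \ref{F-nk}, it suffices to analyze $n \ge 0$ (the $n<0$ contribution is handled by the same estimates after the involution $(x,y-x-1/4)\mapsto(-x,y-1/4)$, which is why the factor $3$ rather than $2$ will appear — one copy for $n>0$ interior, one for $n<0$ interior via the involution, one for the near-boundary slab). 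Concretely, write
\begin{equation*}
\mathcal{N}_{disk}(\mu) = \#\{(n,k): j_{n,k} \le \mu,\ k \ge \widetilde C_0 |n|\} + \sum_{\text{sign of }n}\ \sum_{k < \widetilde C_0 |n|} \mathcal N_k(\mu),
\end{equation*}
and similarly decompose $N_\Omega(\mu)$ into a lattice-point count over $\{(n,k-1/4)\in\mu\Omega:\ k\ge \widetilde C_0|n|\}$ plus $\sum \sum_{k<\widetilde C_0|n|} N_k(\mu)$, matching the two decompositions term by term.

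Next I would estimate the interior regime. Here Lemma \ref{sta} gives $j_{n,k} = F(n,k-1/4) + O((n+k+1)^{-1})$ for $n\ge 0$ (and the analogous formula for $n<0$), while $(n,k-1/4)\in\mu\Omega$ is exactly the condition $F(n,k-1/4)\le\mu$. Thus a lattice point is counted by one function but not the other only when $F(n,k-1/4)$ lies within $O((n+k+1)^{-1}) = O(\mu^{-1})$ of $\mu$ — that is, in a thin annular shell $|F(n,k-1/4)-\mu| \lesssim 1/\mu$. The number of such points is at most $N_\Omega(\mu+C/\mu)-N_\Omega(\mu-C/\mu)$ (translating the shell into a difference of lattice counts for the dilated region $\mu\Omega$, using that $F$ is homogeneous of degree $1$). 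This accounts for two of the three copies in the claimed bound (the $n\ge 0$ part and, after applying the involution of Lemma \ref{F-nk} to transport the $n<0$ part back into $\mu\Omega$, a second part). One must be slightly careful that the cutoff $k \ge \widetilde C_0|n|$ versus $k \ge \tilde c_0 |n|$ introduces only points in a transitional sector $\tilde c_0|n| \le k < \widetilde C_0|n|$, which are absorbed into the near-boundary estimate below.

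For the near-boundary regime, I would sum the estimate of Lemma \ref{Nkerror} over $k$. Since for $n>0$ with $j_{n,k}<(1+c_0)n$ we have $k < \tilde c_0 n$, and the relevant $k$ range for a given $\mu$ is $1 \le k \lesssim \mu$, summing $|N_k(\mu)-\mathcal N_k(\mu)| \le N_k(\mu+C/\mu)-N_k(\mu-C/\mu) + C\mu^{1/3}k^{-4/3}$ over $k\ge 1$ gives the telescoping-type bound $\sum_k (N_k(\mu+C/\mu)-N_k(\mu-C/\mu)) \le N_\Omega(\mu+C/\mu)-N_\Omega(\mu-C/\mu)$ for the first term (the slabs for distinct $k$ are disjoint subsets of the lattice count for $\mu\Omega$), plus $C\mu^{1/3}\sum_{k\ge 1} k^{-4/3} = O(\mu^{1/3})$ for the second, since $\sum k^{-4/3}$ converges. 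This is the third copy. Collecting the interior contributions (two copies of a shell count) and the near-boundary contribution (one more shell count plus $O(\mu^{1/3})$) yields the stated inequality. The main obstacle is bookkeeping rather than analysis: one must set up the decomposition so that the various shell-difference terms are genuinely counting disjoint (or boundedly overlapping) sets of lattice points, so that they really do sum to at most a fixed constant multiple of $N_\Omega(\mu+C/\mu)-N_\Omega(\mu-C/\mu)$ — in particular verifying that the transitional sector $\tilde c_0|n|\le k<\widetilde C_0|n|$ is correctly attributed and not double-counted, and that the $O(\mu^{-1})$ error in Lemma \ref{sta} is uniform enough over the relevant range of $(n,k)$ to be swallowed by the $C/\mu$ dilation.
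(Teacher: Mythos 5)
Your proposal is correct and follows essentially the same route as the paper: split into an interior regime handled by Lemma \ref{sta} via a shell-counting argument and a near-boundary regime handled by summing Lemma \ref{Nkerror} over $k$, with the symmetry of Lemma \ref{F-nk} reducing $n<0$ to $n>0$. The only difference is that you use a sharp cutoff $k\gtrless\widetilde C_0|n|$ where the paper uses the smooth weight $\chi(k/n)$ (which forces the paper to absorb an extra full shell count when comparing the weighted sums $N^2_\Omega$ with the unweighted $\sum_k N_k$); your bookkeeping is equally valid and yields the stated constant.
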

\begin{proof}
As in \cite{colin}, we fix a smooth cut-off function $\chi$ which is even and compactly supported in $(-\widetilde C_0, \widetilde C_0)$, such that $\chi \equiv 1$ in the interval $\left[-\tilde c_0, \tilde c_0\right] \subset (-1, 1)$. Using $\chi$ one splits $N_\Omega(\mu)$ into three parts, according to whether a point is close to the two boundary rays, and split  $\mathcal N_{disk}(\mu)$ into three parts in the same fashion,
\[\aligned
N_\Omega(\mu) &=N^1_\Omega(\mu) +N^2_\Omega(\mu) +N^3_\Omega(\mu),\\
\mathcal N_{disk}(\mu) & = \mathcal N^1_{disk}(\mu)  + \mathcal N^2_{disk}(\mu) +\mathcal N^3_{disk}(\mu),
\endaligned\]
where \footnote{The $\chi$'s in the splittings that appeared in \cite[page 8]{colin} should be $1-\chi$.}
\[\aligned
&N^1_{\Omega}(\mu)=\sum_{(n,k+\max(0,-n)-1/4)\in\mu \Omega}(1-\chi(k/n)),
\\ &N^2_{\Omega}(\mu)=\sum_{n\ge 0,(n,k-1/4)\in\mu \Omega}\chi(k/n),
\\&  N^3_{\Omega}(\mu)=\sum_{n< 0,(n,k+|n|-1/4)\in\mu \Omega}\chi(k/n)\endaligned\]
and
\[\aligned
& \mathcal N^1_{disk}(\mu) =\sum_{j_{n,k}\le\mu}(1-\chi(k/n)),
\\ & \mathcal N^2_{disk}(\mu)=\sum_{n\ge 0,j_{n,k}\le \mu}\chi(k/n),
\\& \mathcal N^3_{disk}(\mu)=\sum_{n< 0,j_{n,k}\le \mu}\chi(k/n).
\endaligned\]
In the above expression, we will take $\chi(k/n)=0$ for $n=0$. 
Note that by the evenness of $\chi$, one has $\mathcal N^2_{disk}(\mu)=\mathcal N^3_{disk}(\mu)$. Moreover, in view of Lemma \ref{F-nk}, $N^2_\Omega(\mu)=N^3_\Omega(\mu)$. 

For the ``inner part", {\it i.e.} the parts with superscript 1, one has $1-\chi(k/n)=0$ for $j_{n,k}<(1+c_0)|n|$. By using Lemma \ref{sta} it was shown in \cite{colin} that
\begin{equation}\label{part1}
N_\Omega^1(\mu- {C}/{\mu})\le \mathcal{N}_{disk}^1(\mu)\le
N_\Omega^1(\mu+ {C}/{\mu}).
\end{equation}
In particular, we get
\[
|\mathcal N_{disk}^1(\mu)-N_\Omega^1(\mu)| \le N_\Omega^1(\mu+C/\mu) - N_\Omega^1(\mu-C/\mu).
\]
For the parts with superscript 2, in view of the fact $\chi(  k/n)=0$ for $k>\widetilde C_0n$, one has
\[\aligned
|N_\Omega^2(\mu) -\mathcal N_{disk}^2(\mu)| & \le
\sum_{k=1}^\infty
\left|
 \sum_{n \in A_k(\mu)}\chi(  k/n) - \sum_{n \in \mathcal A_k(\mu)}\chi( k/n)
\right|
\\& \le \sum_k |  N_k(\mu) -  \mathcal N_k(\mu)|,
\endaligned\]
where in the last step we used the fact that for each $k$, either $A_{k,\mu} \subset \mathcal A_{k,\mu}$, or $\mathcal A_{k,\mu} \subset A_{k,\mu}$.
So  if we denote
\[A_\mu  := \{(k,n)\ |\ k\ge 1, n \in A_{k,\mu}\},\]
then by definition, we have
\[\aligned
& \sum_k\left(N_k(\mu+{C}/{\mu})-N_k(\mu-{C}/{\mu})\right) \\
= &  N_\Omega^2(\mu+ C/\mu)-N_\Omega^2(\mu- C/\mu)+ \sum_{(k,n) \in A_{\mu+ C/\mu} \setminus A_{\mu- C/\mu}}(1 -  \chi( k/n))
\\ \le & N_\Omega^2(\mu+ C/\mu)-N_\Omega^2(\mu- C/\mu)+ \# A_{\mu+ C/\mu}\setminus A_{\mu- C/\mu}
\\
\le &  N_\Omega^2(\mu+ C/\mu)-N_\Omega^2(\mu- C/\mu) +  N_\Omega(\mu+C/\mu) - N_{\Omega}(\mu-C/\mu).
\endaligned\]
Combining the above two inequalities with  Lemma \ref{Nkerror}, we get
\[\aligned
|\mathcal{N}_{disk}^2(\mu)-N_\Omega^2(\mu)|   \le & N_\Omega^2(\mu+ C/\mu)-N_\Omega^2(\mu- C/\mu)   +N_\Omega(\mu+C/\mu)  \\
&- N_{\Omega}(\mu-C/\mu)+O(\mu^{1/3}).
\endaligned\]
The same bound apply to the parts with superscripts 3, and we thus get
\[
|\mathcal{N}_{disk}(\mu)-N_\Omega(\mu)|  \le  3( N_\Omega(\mu+ {C}/{\mu})-N_\Omega(\mu-{C}/{\mu}))+O(\mu^{1/3}).
\]
\end{proof}

Now we can prove
\begin{proof}[Proof of Theorem \ref{spectrum-thm}]
According to (\ref{remainder}) and Theorem \ref{lattice-thm}, one  has
\[
N_\Omega(\mu) = \pi \mu^2 -\mu/2+O(\mu^{2/3-1/495}).
\]
Plug this into Proposition \ref{error}, we immediately get
\[\aligned
|\mathcal R_{disk}(\mu)| & = |\mathcal N_{disk}(\mu) -\pi \mu^2+\mu/2| \\
& = 3( N_\Omega(\mu+ {C}/{\mu})-N_\Omega(\mu-{C}/{\mu}))+ O(\mu^{2/3-1/495})
\\
& = O(\mu^{2/3-1/495}).
\endaligned\]
\end{proof}

\begin{remark}\label{neumann}
Similarly, for the Neumann boundary condition case, the eigenvalues are given  by the zeroes $j'_{n,k}$'s of the derivative of the Bessel functions $J_n$'s, and one can prove
\[ j^{\prime}_{n,k}=\left\{
\begin{array}{ll}
F(n,k-\frac{3}{4})+O(\frac{1}{n+k+1}),\;&\mathrm{if}\;n\ge 0,\\
F(n,k+|n|-\frac{3}{4})+O(\frac{1}{|n|+k+1}),\;&\mathrm{if}\;n<0
\end{array}
\right.\]
for $j^{\prime}_{n,k}>(1+\tilde{c})|n|$. As a consequence, one can convert the counting problem for $\mathcal N_{disk}(\mu)$ with Neumann boundary condition to the counting problem for $N_\Omega(\mu)$ with $(a, b)=(0, -3/4)$.
Essentially the same argument as above gives
$\mathcal R^{Neumann}_{disk}(\mu) = O(\mu^{  2/3-  1/{495}})$. \end{remark}

%


\appendix
\section{A Useful Lemma}\label{app1}

Here is a quantitative version of the inverse function theorem. It
is routine to prove it by following a standard proof of the theorem.

\begin{lemma}\label{app:lemma:1}
Suppose that $f$ is a $C^{(k)}$ ($k\geqslant 2$) mapping from an
open set $\Omega\subset\mathbb{R}^d$ into $\mathbb{R}^d$ and
$b=f(a)$ for some $a\in \Omega$. Assume $|\det (\nabla f(a))|$
$\geqslant$ $c$ and for any $x\in\Omega$,
\begin{equation*}
|D^{\nu} f_i(x)|\leqslant C \quad \quad \textrm{for $|\nu|\leqslant
2$, $1\leqslant i\leqslant d$}.
\end{equation*}
If $r_0\leqslant \sup\{r>0: B(a, r)\subset \Omega\}$ then $f$ is
bijective from $B(a, r_1)$ to an open set containing $B(b, r_2)$
where
\begin{equation*}
r_1=\min\left\{\frac{c}{2d^{2} d! C^d}, r_0\right\}  \quad \textrm{and}\quad
r_2=\frac{c}{4d!C^{d-1}}r_1.
\end{equation*}
The inverse mapping $f^{-1}$ is also in $C^{(k)}$.
\end{lemma}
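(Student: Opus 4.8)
The statement to prove is Lemma~\ref{app:lemma:1}, a quantitative inverse function theorem. The plan is to follow the classical contraction-mapping proof of the inverse function theorem, but to track all constants explicitly in terms of $c$, $C$, $d$, and $r_0$. First I would reduce to the case $a=0$, $b=f(a)=0$, and replace $f$ by $L^{-1}f$ where $L=\nabla f(a)$, so that the linearization at the origin is the identity; this rescaling changes the constants in a controlled way because $\|L^{-1}\|$ is bounded in terms of $|\det L|^{-1}\leq c^{-1}$ and the bound $C$ on the first derivatives (via the cofactor formula, $\|L^{-1}\|\lesssim d!\,C^{d-1}/c$). Actually, to keep the statement's constants clean it is cleaner not to rescale but to work directly: set $g(x)=x$-independent estimates using Taylor's theorem with the second-derivative bound $C$.

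The core steps are as follows. For fixed $y$ near $b$, define $\Phi_y(x)=x+L^{-1}(y-f(x))$, so that fixed points of $\Phi_y$ are exactly solutions of $f(x)=y$. Using $\nabla\Phi_y(x)=I-L^{-1}\nabla f(x)=L^{-1}(\nabla f(a)-\nabla f(x))$ together with the mean value inequality and the second-derivative bound $|D^\nu f_i|\leq C$ (which gives $\|\nabla f(a)-\nabla f(x)\|\lesssim d\,C|x-a|$) and $\|L^{-1}\|\leq d!\,C^{d-1}/c$, one finds that $\|\nabla\Phi_y(x)\|\leq \tfrac12$ on $B(a,r_1)$ provided $r_1\leq c/(2d^2\,d!\,C^d)$, which is exactly the stated choice (also capping $r_1$ by $r_0$ to stay inside $\Omega$). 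Hence $\Phi_y$ is a $\tfrac12$-contraction on $B(a,r_1)$. Next, $|\Phi_y(a)-a|=|L^{-1}(y-b)|\leq \|L^{-1}\|\,|y-b|\leq (d!\,C^{d-1}/c)\,r_2$, and with the stated $r_2=\tfrac{c}{4d!C^{d-1}}r_1$ this is $\leq r_1/4$, so $\Phi_y$ maps the closed ball $\overline{B(a,r_1)}$ into itself whenever $y\in B(b,r_2)$. The contraction mapping theorem then yields a unique fixed point $x=f^{-1}(y)\in B(a,r_1)$ for each such $y$, proving that $f$ restricted to $B(a,r_1)$ is injective and that its image contains $B(b,r_2)$. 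Openness of the image and $C^{(k)}$ regularity of $f^{-1}$ follow from the standard inverse function theorem applied locally (the Jacobian stays invertible on $B(a,r_1)$ since $\|I-L^{-1}\nabla f(x)\|\leq\tfrac12$ forces $\det\nabla f(x)\neq 0$), together with the formula $\nabla(f^{-1})(y)=(\nabla f(f^{-1}(y)))^{-1}$ and induction on the order of differentiation.

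I do not expect a genuine obstacle here; this is a bookkeeping exercise. The only mildly delicate point is getting the numerical constants to match the ones written in the statement exactly, which requires being slightly careful about whether one uses $|x-a|$ or $\sqrt d\,|x-a|$-type bounds when passing between operator norms of matrices and entrywise bounds on derivatives, and about the precise cofactor estimate $\|L^{-1}\|\leq d!\,C^{d-1}/c$. One should also note the mild point that $\Phi_y$ must be shown to map the \emph{closed} ball into itself (strict contraction plus the $r_1/4$ displacement estimate gives this with room to spare), so that completeness is available for the fixed-point argument, while the conclusion is phrased on the open ball $B(a,r_1)$. Since the problem only asks for the stated inequalities, which are themselves not sharp, there is no need to optimize, and the proof is complete once these estimates are assembled.
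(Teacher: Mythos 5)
Your proposal is correct and is precisely the routine contraction-mapping argument that the paper itself invokes without writing out (it states only that the lemma "is routine to prove by following a standard proof" of the inverse function theorem). The constants check out with room to spare: $\|L^{-1}\|\le d!\,C^{d-1}/c$ from the cofactor formula and $\|\nabla f(x)-\nabla f(a)\|\le d^{2}C|x-a|$ give the $\tfrac12$-contraction on $B(a,r_1)$, and the displacement bound $r_1/4$ lets you run the fixed-point argument on a closed ball of radius $3r_1/4$, which stays inside $\Omega$.
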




\end{document}